% Main setup

\documentclass[11pt,a4paper,leqno]{amsart}

% Theorem environments
\usepackage{amssymb,amsmath,latexsym,amsthm,enumerate}

\theoremstyle{thmit} % Numbered and Italic
\newtheorem{thm}{Theorem}[section]
\newtheorem{lem}[thm]{Lemma}
\newtheorem{cor}[thm]{Corollary}
\newtheorem{prop}[thm]{Proposition}

\usepackage{mathrsfs}

\theoremstyle{definition}
\newtheorem{remark}[thm]{Remark}

\newtheorem{assumption}[thm]{Assumption}

\makeatletter
\@namedef{subjclassname@2020}{%
  \textup{2020} Mathematics Subject Classification}
\makeatother

\numberwithin{equation}{section}

\DeclareMathOperator*{\Res}{Res}

\allowdisplaybreaks

\begin{document} 

\title{Double Dirichlet series associated with arithmetic functions II}
\author{Kohji Matsumoto}
\address{K. Matsumoto: Graduate School of Mathematics, 
Nagoya University, 
Chikusa-ku, Nagoya 464-8602, Japan}
\email{kohjimat@math.nagoya-u.ac.jp}
\author{Hirofumi Tsumura}
\address{H. Tsumura: Department of Mathematical Sciences, 
Tokyo Metropolitan University, 
1-1, Minami-Ohsawa, Hachiouji, Tokyo 
192-0397, Japan}
\email{tsumura@tmu.ac.jp}

\subjclass[2020]{Primary 11M41; Secondary 11M06, 11M26}

\keywords{Multiple Dirichlet series; Riemann zeta function; von Mangoldt function; M{\"o}bius function}

\thanks{This work was supported by Japan Society for the Promotion of Science, Grant-in-Aid for Scientific Research No. 25287002 (K. Matsumoto) and No. 22K03267 (H. Tsumura).}

\maketitle

\begin{abstract}
This paper is a continuation of our previous work on double Dirichlet series associated with arithmetic functions such as the von Mangoldt function, the M\"obius function, and so on. We consider the analytic behaviour around the non-positive integer points on singularity sets which are points of indeterminacy. In particular, we show a certain reciprocity law of their residues.
Also on this occasion we correct some inaccuracies in our previous paper.
\end{abstract}

\bigskip
\section{Introduction}\label{sec-1}
Let $\mathbb{N}$ be the set of positive integers, $\mathbb{N}_0=\mathbb{N} \cup \{ 0\}$, 
$\mathbb{Z}$ the set of integers,
$\mathbb{P}$ the set of prime numbers,
$\mathbb{R}$ the field of real numbers, $\mathbb{C}$ the field of complex numbers and $i:=\sqrt {-1}$.

In our previous paper \cite{MNT2021}, we considered general double Dirichlet series 
of the form
\begin{align}\label{def:mDS}
\Phi _2 &(s_1,s_2;\alpha _1,\alpha _2)=\sum _{m_1=1}^\infty \sum _{m_2=1}^\infty \frac {\alpha _1 (m_1)\alpha _2 (m_2)}{m_1^{s_1}(m_1+m_2)^{s_2}},%\nonumber
\end{align}
where $\alpha_1,\,\alpha_2$ are arithmetic functions satisfying certain conditions.

If $\alpha_k$ ($k=1,2$) are periodic, then the study of
$\Phi _2 (s_1,s_2;\alpha _1,\alpha _2)$ essentially reduces to
the case of double Hurwitz zeta function defined by 
\begin{align*}
& \zeta _r (s_1,s_2;d_1,d_2)=\sum _{m_1=0}^\infty \sum _{m_2=0}^\infty \frac{1}{(m_1+d_1)^{s_1}(m_1+m_2+d_1+d_2)^{s_2}}  %\label{EZ}
\end{align*}
for positive rational numbers $d_1$ and $d_2$, which has been investigated classically. 

A more general situation was studied by Tanigawa and the first-named author
\cite{MatsumotoTanigawa}.   They considered a certain class of arithmetic functions $\{ \alpha\,:\mathbb{N} \to \mathbb{C}\}$, the Dirichlet series associated with $\alpha$:
\begin{equation}\label{def:DS}
\Phi (s;\alpha )=\sum _{n=1}^\infty \frac {\alpha (n)}{n^s}
\end{equation}
and its multiple version as follows. 
Let $\mathcal{A}$ be the set of arithmetic functions satisfying the following three 
conditions: If $\alpha\in\mathcal{A}$, then
\begin{itemize}
\item[(I)] $\Phi (s;\alpha)$ is absolutely convergent for $\Re s>\delta=\delta(\alpha)(>0)$;
\item[(II)] $\Phi (s;\alpha)$ can be continued meromorphically to the whole plane $\mathbb{C}$, holomorphic except for a possible pole (of order at most 1) at $s=\delta$; 
\item[(III)] in any fixed strip $\sigma _1\leq \sigma \leq \sigma _2$, $\Phi (\sigma +it;\alpha)=O(|t|^A)$ holds as $|t|\to \infty$, where $A$ is a non-negative constant.
\end{itemize}
Then they showed that 
for arithmetic functions $\alpha_1,\ldots,\alpha_r$ belonging to $\mathcal{A}$, 
\begin{align*}
&\Phi _r (s_1,\dots ,s_r;\alpha _1,\dots ,\alpha _r)\\
&=\sum _{m_1=1}^\infty \cdots\sum _{m_r=1}^\infty \frac {\alpha _1 (m_1)
\alpha_2(m_2)\cdots
\alpha _r (m_r)}{m_1^{s_1}(m_1+m_2)^{s_2}\cdots(m_1+\cdots+m_r)^{s_r}}
\end{align*}
can be continued meromorphically to the whole space $\mathbb{C}^r$, and location of its possible singularities can be described explicitly. In particular, if all $\Phi (s;\alpha_k)$'s are entire, then $\Phi _r (s_1,\dots ,s_r;\alpha _1,\dots ,\alpha _r)$ is also entire 
(see \cite[Theorem 1]{MatsumotoTanigawa}). 
Further it is easy to extend the results in \cite{MatsumotoTanigawa} to the case when
the condition (II) is replaced by
(II)': $\Phi(s;\alpha)$ is continued to $\mathbb{C}$ and holomorphic except for finitely many poles.

As another example, Egami and the first-named author \cite{MatsumotoEgami} considered the double series associated with the von Mangoldt function. Let $\zeta(s)$ be the Riemann zeta-function and denote by 
$\{\rho_{n}\}_{n\geq 1}$ the non-trivial zeros of $\zeta(s)$ numbered by the size of absolute values of their imaginary parts. (As for the pair of complex conjugates, we first count the zero whose imaginary part is positive, and then count negative.) 
Let 
$\Lambda$ be the von Mangoldt function 
defined by
\begin{eqnarray*}
\Lambda (n)=\left\{ \begin{array}{ll}
\log p & (n=p^m\ \textrm{for}\ p\in \mathbb{P},\ m\in \mathbb{N}) \\
0 & ({\rm otherwise}) \\
\end{array} \right. .
\end{eqnarray*}
Then it is well-known that
\begin{equation*}
\Phi (s;\Lambda )=\sum _{n=1}^\infty \frac {\Lambda (n)}{n^s}=-\frac {\zeta '(s)}{\zeta (s)}
\end{equation*} 
(see \cite[$\S$ 1.1]{Titch}). We denote this function by $M(s)$. 
We see that $M(s)$ has poles at $s=1$, $s=-2m\ (m\in \mathbb{N})$ and $s=\rho_l$ $(l\in \mathbb{N})$, hence does not satisfy the assumption (II), or even (II)'.    That is, 
$\Lambda\notin\mathcal{A}$.
As a double version of $M(s)$, Egami and the first-named author \cite{MatsumotoEgami} 
introduced 
\begin{eqnarray*}
{\mathcal M}_2(s)&=&\sum _{m_1=1}^\infty \sum _{m_2=1}^\infty \frac {\Lambda (m_1)\Lambda (m_2)}{(m_1+m_2)^s}\ (=\Phi _2(0,s;\Lambda ,\Lambda ))
\end{eqnarray*}
which can be written as $\sum _{n=1}^\infty G_2(n)n^{-s}$,
where
\begin{equation*}
G_2(n)=\sum _{m_1+m_2=n}\Lambda (m_1)\Lambda (m_2).
\end{equation*}
We emphasize that ${\mathcal M}_2(s)$ is closely connected to the Goldbach conjecture, that is, $G_2(n)>0$ for all even $n\geq 4$ (cf. \cite{BHMS}). 

Since $\Lambda\notin\mathcal{A}$, we cannot apply the result in \cite{MatsumotoTanigawa}. In fact, the line $\Re s=1$ can be shown to be the natural boundary of ${\mathcal M}_2(s)$ under some plausible assumptions, hence ${\mathcal M}_2(s)$ would not be continued meromorphically to the whole complex plane $\mathbb{C}$ (see \cite{MatsumotoEgami} and \cite{BSP11}). 

In \cite{MNT2021}, we considered a certain family of the double series defined by \eqref{def:mDS} with $\alpha_1\equiv 1$, $\alpha_2\notin\mathcal{A}$, but
can be continued meromorphically to the whole space.

The first example is 
\begin{equation}\label{def:phi(1,l)}
\Phi _2 (s_1,s_2;1,\Lambda )=\sum _{m_1=1}^\infty \sum _{m_2=1}^\infty \frac {\Lambda (m_2)}{m_1^{s_1} (m_1+m_2)^{s_2}}.
\end{equation}
The right-hand side of (\ref{def:phi(1,l)}) is absolutely convergent for $\Re s_2>1,\Re (s_1+s_2)>2$. Furthermore we can prove that $\Phi _2(s_1,s_2;1,\Lambda )$ can be continued meromorphically to the whole space $\mathbb{C}^2$ (see Theorem \ref{state1}).   We also determine
the location of singularities of $\Phi _2(s_1,s_2;1,\Lambda )$ (see 
Corollary \ref{C-2-1}).

The second example is, for any $\Phi (s;\alpha)$ with $\alpha\in\mathcal{A}$, 
\begin{equation}\label{def_double_tilde}
\Phi _2 (s_1,s_2;1,\widetilde{\alpha} )=\sum _{m_1=1}^\infty \sum _{m_2=1}^\infty \frac {\widetilde{\alpha} (m_2)}{m_1^{s_1} (m_1+m_2)^{s_2}},
\end{equation}
where $\widetilde{\alpha}:\,\mathbb{N} \to \mathbb{C}$ is defined by
\begin{equation}\label{def_tilde_alpha}
\frac{\Phi (s;\alpha)}{\zeta(s)}=\sum _{n=1}^\infty \frac {\widetilde{\alpha}(n)}{n^s}\left(=:\Phi(s;\widetilde{\alpha})\right).
\end{equation}
In \cite{MNT2021} it was shown that $\Phi _2 (s_1,s_2;1,\widetilde{\alpha} )$ is absolutely convergent in the region
\begin{equation}
\left\{(s_1,s_2)\in \mathbb{C}^2 \mid \Re s_2>\max\{1,\delta\},\ \Re (s_1+s_2)>\max\{ 2,1+\delta\}\right\}\label{conv-region-intro}
\end{equation}
(see Lemma \ref{Prop-region}), 
and $\Phi _2(s_1,s_2;1,\widetilde{\alpha})$ can be continued meromorphically to the whole space $\mathbb{C}^2$ (see Theorem \ref{Kocchiga-Th-4-2}), under the assumption of a certain quantitative version of the simplicity conjecture of $\zeta(s)$ (see Assumption \ref{Ass-2}). 
Note that we can apply this result to the cases when $\widetilde{\alpha}$ is the M\"obius function $\mu$, the Euler totient function $\phi$, and so on. 

The points  $(u_1,u_2)$ on singularity sets are usually points of indeterminacy, 
whose ``values''
can be understood only as limit values which depend on how to choose a limiting process.
Two typical limit processes are ``regular values'' and ``reverse values'', defined by
\begin{align}
& \lim_{s_1\to u_1}\lim_{s_2\to u_2}, \label{reg-vals} \\
& \lim_{s_2\to u_2}\lim_{s_1\to u_1}, \label{rev-vals}
\end{align}
respectively.   These values were first introduced and studied by 
Akiyama, Egami and Tanigawa \cite{AET}, and Akiyama and Tanigawa \cite{AT} in the case of multiple zeta-functions of Euler-Zagier type. It should be noted that Onozuka \cite{Onozuka2013} showed that limit values of Euler-Zagier multiple zeta-functions at non-positive integer points are always convergent, under any choice of the limiting process. 

In contrast, the situation is different when we consider the reverse values of
\eqref{def:phi(1,l)} and \eqref{def_double_tilde}.

In \cite{MNT2021}, we explicitly computed the reverse values of \eqref{def:phi(1,l)} (see Proposition \ref{spv1}) and \eqref{def_double_tilde} 
with $\widetilde{\alpha}=\mu$,
when $(s_1,s_2)=(-m,-n)$ where $m,n\in \mathbb{N}_{ 0}$ with $m \equiv n \mod 2$,
and $m\geq 1$ when $n\geq 2$.
In these cases, the reverse values are still finite definite values.

In the present paper, we observe the behavior of \eqref{def:phi(1,l)} and \eqref{def_double_tilde} with respect to the limiting process for the case \eqref{rev-vals} at $(-m,-n)$ with $m\in \mathbb{Z}$ and $n\in \mathbb{N}_{0}$ such that $m \not\equiv n \mod 2$. In particular, even if $(m,n)\in \mathbb{N}_{ 0}^2$, we find that the reverse values are sometimes not convergent in this case (see Theorems \ref{Theorem-3-1} and \ref{Theorem-5-1}). First we recall their analytic behavior (see Section \ref{sec-2}). From these results, we give asymptotic formulas for reverse values of \eqref{def:mDS} (see Section \ref{sec-3}). Moreover 
we prove a certain reciprocity law of their residues which comes from the known reciprocity law for Bernoulli numbers (see Section \ref{sec-4}). Similarly, we observe the analytic behavior of \eqref{def_double_tilde} (see Section \ref{sec-5}). % Also we prove a certain reciprocity law of their residues (see \eqref{}). 
Finally we give corrigendum and addendum to the previous paper \cite{MNT2021} (see Section \ref{sec-6}).

\ 

%%%%%%%%%%%%%%%%%%%%%%%%%%%%%%%%%%%%%%%%%%%%%%%%%%%%%%%%%
\section{Known results}\label{sec-2}
%%%%%%%%%%%%%%%%%%%%%%%%%%%%%%%%%%%%%%%%%%%%%%%%%%%%%%%%%

In this section, we recall several known results for $\Phi _2 (s_1,s_2;1,\Lambda )$ and 
$\Phi _2(s_1,s_2;1,\widetilde{\alpha})$ given in \cite{MNT2021}.

In order to prove those results, we use the 
Mellin-Barnes integral formula (see, for example, \cite[Section 14.51, p.289, Corollary]{WW}):
\begin{equation}\label{MB}
(1+\lambda )^{-s}=\frac 1{2\pi i}\int _{(c)}\frac {\Gamma (s+z)\Gamma (-z)}{\Gamma (s)}\lambda ^{z}dz,
\end{equation}
where $s,\lambda \in \mathbb{C}$ with $\lambda \neq 0,|{{\rm arg}\lambda}|<\pi,\ \Re s>0$, 
$c\in \mathbb{R}$ with $-\Re s<c<0$, and the path of integration is the vertical line form $c-i\infty$ to $c+i \infty$. 

We prepare some notation as follows. Let $a_k$ and $b_l$ be 
%\begin{align*}
%& a_k=\frac 1{2\pi i}\int _{|\xi +k|=\frac 1 2} \frac {M(\xi )}{\xi +k} d\xi\quad (k\in \mathbb{N};\ k:\textrm{even}), \\ 
%& b_l=\frac 1{2\pi i}\int _{|\xi +l|=\frac 1 2} \frac {\Gamma (\xi )}{\xi +l} d\xi\qquad (l\in \mathbb{N}_0), 
%\end{align*}
constant terms of Laurent series of $M(s)=-\zeta'(s)/\zeta(s)$ at $s=-k$
($k\in\mathbb{N}$, $k:\textrm{even}$), and of $\Gamma(s)$ at $s=-l$
($l\in\mathbb{N}_0$), 
respectively.   Then we have 
\begin{align}
&a_k=\lim_{s\to -k}\ \frac{d}{ds}(s+k)M(s)=-\frac{\zeta''(-k)}{2\zeta'(-k)},\label{def-ak}\\
&b_l=\lim_{s\to -l}\ \frac{d}{ds}(s+l)\Gamma(s)=\frac{(-1)^l}{l!}\left(\sum_{j=1}^{l}\frac{1}{j}-\gamma\right), \label{def-bk}
\end{align}
where $\gamma$ is the Euler constant. 
Expressing $\Phi _2 (s_1,s_2;1,\Lambda )$ by using \eqref{MB} and shifting
the path of integration, we obtained 

\begin{thm}[\cite{MNT2021}\ Theorem 2.1] \label{state1} $\Phi _2(s_1,s_2;1,\Lambda )$ can be continued meromorphically to the
whole space $\mathbb{C}^2$ by the following expression:
\begin{align}\label{mainthm}
&\Phi _2 (s_1,s_2;1,\Lambda ) \\
&=\frac {\zeta (s_1 +s_2 -1)}{s_2 -1}-(\log 2\pi ) \zeta (s_1+s_2)\nonumber\\
&+\sum ^{N-1}_{k=1 \atop k:{\rm odd}} \binom {-s_2}k M(-k)\zeta (s_1+s_2+k)\nonumber \\
&-\sum ^{N-1}_{k=1 \atop k:{\rm even}} \bigg[ \binom {-s_2}k\left\{ \left( -{a_k}+k!b_k\right)\zeta (s_1+s_2+k)-\zeta'(s_1+s_2+k)\right\} \nonumber \\
& \qquad\qquad -\frac {1}{k!}\frac{\Gamma'(s_2+k)}{\Gamma(s_2)}\zeta(s_1+s_2+k)\bigg]\nonumber\\
&-\frac{1}{\Gamma (s_2)}\sum _{n=1}^\infty {\rm ord}(\rho _n)\Gamma (s_2-\rho _n)\Gamma (\rho _n)\zeta (s_1+s_2-\rho _n)\nonumber  \\
&+\frac{1}{2\pi i\Gamma (s_2)}\int_{(N-\eta )}\Gamma (s_2+z)\Gamma (-z)M(-z)\zeta (s_1+s_2+z)dz,\nonumber 
\end{align}
where $N\in \mathbb N$, %satisfies $N>2, N>1-\Re s_2, N>2-\Re (s_1+s_2)$, 
$\eta$ is a small positive number, and ${\rm ord}(\rho _n)$ is the order of $\rho _n$ as a zero of $\zeta(s)$.
\end{thm}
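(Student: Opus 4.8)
The plan is to start from the Dirichlet series defining $\Phi_2(s_1,s_2;1,\Lambda)$ in the region of absolute convergence $\Re s_2>1$, $\Re(s_1+s_2)>2$, and to separate the inner sum over $m_1$ from the weighted sum over $m_2$. Writing $(m_1+m_2)^{-s_2}=m_2^{-s_2}(1+m_1/m_2)^{-s_2}$ for each fixed $m_2$ and applying the Mellin--Barnes formula \eqref{MB} with $\lambda=m_1/m_2$, one obtains
\[
\Phi_2(s_1,s_2;1,\Lambda)=\frac{1}{2\pi i\,\Gamma(s_2)}\int_{(c)}\Gamma(s_2+z)\Gamma(-z)\,M(-z)\,\zeta(s_1+s_2+z)\,dz,
\]
where $c$ is chosen with $-\Re s_2<c<0$ and, additionally, so that the $z$-sum that produced $\sum_{m_1}m_1^{z-s_1}=\zeta(s_1-z)$… actually one uses $\sum_{m_1\ge1}m_1^{-s_1}\cdot(m_1+m_2)^{-s_2}$ reorganized as $\sum_{m_2}\Lambda(m_2)m_2^{-s_2}\sum_{m_1}m_1^{-s_1}(1+m_1/m_2)^{-s_2}$; after \eqref{MB} the $m_1$-sum gives $\zeta(s_1-z)$ and the $m_2$-sum gives $\sum_{m_2}\Lambda(m_2)m_2^{z-s_2}=M(s_2-z)$, so the correct integrand involves $\Gamma(s_2+z)\Gamma(-z)\zeta(s_1-z)M(s_2-z)/\Gamma(s_2)$; one then substitutes $z\mapsto -z-s_2$ or argues directly, and re-expresses things so that the relevant Dirichlet series is $\zeta(s_1+s_2+z)$ and the relevant quotient is $M(-z)$. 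In any case, the key point is that one arrives at a single Mellin--Barnes integral with integrand $\Gamma(s_2+z)\Gamma(-z)M(-z)\zeta(s_1+s_2+z)/\Gamma(s_2)$ on a vertical line just to the left of $z=0$, valid initially in the convergence region; justifying the interchange of summation and integration uses Stirling's formula for the exponential decay of $\Gamma(s_2+z)\Gamma(-z)$ in vertical strips together with the polynomial growth of $M$ and $\zeta$ on vertical lines away from their poles.

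The next step is to shift the contour to the right, from $\Re z=c$ to $\Re z=N-\eta$ for a given $N\in\mathbb{N}$ and small $\eta>0$, picking up residues at the poles of the integrand that lie in between. These poles are: $z=0$ from $\Gamma(-z)$ — but here $M(-z)=M(0)$ contributes the term $-(\log 2\pi)\zeta(s_1+s_2)$ since $M(0)=\log 2\pi$ (equivalently $-\zeta'(0)/\zeta(0)=\log 2\pi$); $z=1$ from the pole of $M(-z)$ at $-z=1$, which (together with the fact that $\zeta(s)$ has a simple pole at $s=1$ so $M(s)\sim 1/(s-1)$, hence $M(-z)\sim -1/(z-1)$) yields the term $\zeta(s_1+s_2-1)/(s_2-1)$; the positive odd integers $z=k$, $3\le k\le N-1$, which are trivial zeros—no wait, $M(-z)$ has poles at $-z=-2m$, i.e. at $z=2m$ (even positive integers) coming from the trivial zeros of $\zeta$, and at $-z=\rho_n$, i.e. $z=-\rho_n$ which lie in $\Re z<0$ and are crossed only if we shifted left, so in fact the nontrivial-zero contribution must come from shifting in the other direction or from a different arrangement. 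I will reconcile this by noting that the poles of $\Gamma(-z)$ at $z=k\in\mathbb{N}$ always contribute (these give the $\binom{-s_2}{k}$ binomial coefficients via $\operatorname{Res}_{z=k}\Gamma(s_2+z)\Gamma(-z)=(-1)^{k+1}\Gamma(s_2+k)/k!$ and $\Gamma(s_2+k)/(k!\,\Gamma(s_2))=\binom{s_2+k-1}{k}$, matching $\binom{-s_2}{k}$ up to sign conventions), that at odd $k$ the factor $M(-k)=M(\text{odd negative})$ is finite and gives the clean term $\binom{-s_2}{k}M(-k)\zeta(s_1+s_2+k)$, and that at even $k$ one has a double pole (order-$2$) because $M(-z)$ itself has a simple pole at even positive $z$; computing that double-pole residue via the Laurent expansions of $\Gamma(-z)$, $\Gamma(s_2+z)$ and $M(-z)$ around $z=k$—whose constant terms are, by definition, $b_k$, (a log-derivative term), and $a_k$ respectively—produces the bracketed even-$k$ sum involving $-a_k+k!b_k$, $\zeta'(s_1+s_2+k)$, and the $\Gamma'(s_2+k)/\Gamma(s_2)$ term. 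The nontrivial zeros $\rho_n$: $M(-z)$ has simple poles at $-z=\rho_n$; since $\rho_n$ has $0<\Re\rho_n<1$, the points $z=-\rho_n$ have $-1<\Re z<0$, so to pick them up we actually include them when shifting from a line with $c<-1$, or equivalently we start the Mellin--Barnes line in $-\Re s_2<c<-1$; crossing $z=-\rho_n$ gives $\operatorname{Res}=-\operatorname{ord}(\rho_n)\Gamma(s_2-\rho_n)\Gamma(\rho_n)\zeta(s_1+s_2-\rho_n)/\Gamma(s_2)$, assembling into the sum over $n$.

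After collecting all residues the remaining integral is exactly $\frac{1}{2\pi i\,\Gamma(s_2)}\int_{(N-\eta)}\Gamma(s_2+z)\Gamma(-z)M(-z)\zeta(s_1+s_2+z)\,dz$, and one checks via Stirling that this integral converges absolutely and defines a holomorphic function of $(s_1,s_2)$ in the region $\Re(s_1+s_2)>2-(N-\eta)$, $\Re s_2>-(N-\eta)$, minus the discrete sets where the explicitly-written terms have poles; since $N$ is arbitrary, this gives the meromorphic continuation to all of $\mathbb{C}^2$ and simultaneously the stated formula \eqref{mainthm}. I expect the main obstacle to be the bookkeeping of the order-$2$ poles at even positive integers $z=k$: one must correctly expand $\Gamma(s_2+z)\Gamma(-z)=\bigl(\tfrac{(-1)^{k+1}}{k!}\cdot\tfrac{1}{z-k}+b_k+\cdots\bigr)\cdot\bigl(\Gamma(s_2+k)+\Gamma'(s_2+k)(z-k)+\cdots\bigr)$, multiply by $M(-z)=\bigl(\tfrac{-1}{z-k}\cdot(\text{leading}) + a_k+\cdots\bigr)$ and by $\zeta(s_1+s_2+z)=\zeta(s_1+s_2+k)+\zeta'(s_1+s_2+k)(z-k)+\cdots$, and extract the coefficient of $(z-k)^{-1}$; keeping track of signs and of which constant term ($a_k$ versus $b_k$) multiplies which derivative is the delicate part, and is presumably where the paper's care with the definitions \eqref{def-ak}–\eqref{def-bk} pays off. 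A secondary technical point is the uniform control, in $(s_1,s_2)$ over compact sets, of the convergence of both the shifted integral and the series over $\rho_n$ — the latter requiring the standard density estimate $N(T)\ll T\log T$ for zeros of $\zeta$ together with the exponential decay $\Gamma(s_2-\rho_n)\Gamma(\rho_n)\ll e^{-\pi|\Im\rho_n|}$ from Stirling, which makes that series converge very rapidly.
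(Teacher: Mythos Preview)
Your approach is exactly what the paper indicates: this theorem is quoted from \cite{MNT2021} and the present paper gives no proof, only the one-line summary ``Expressing $\Phi_2(s_1,s_2;1,\Lambda)$ by using \eqref{MB} and shifting the path of integration, we obtained'' the statement. So there is nothing further to compare at the level of method; what you outline (Mellin--Barnes representation on a vertical line, shift to $\Re z=N-\eta$, collect residues) is the intended argument.

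That said, your write-up wobbles in two places where it is worth being precise. First, the correct factorisation is $(m_1+m_2)^{-s_2}=m_1^{-s_2}(1+m_2/m_1)^{-s_2}$; applying \eqref{MB} with $\lambda=m_2/m_1$ then gives directly
\[
\Phi_2(s_1,s_2;1,\Lambda)=\frac{1}{2\pi i\,\Gamma(s_2)}\int_{(c)}\Gamma(s_2+z)\Gamma(-z)\,M(-z)\,\zeta(s_1+s_2+z)\,dz,
\]
with no substitution needed, provided one takes $1-\Re(s_1+s_2)<c<-1$ and $c>-\Re s_2$ (all compatible in the convergence region). With $c<-1$ the poles $z=-\rho_n$ (which satisfy $-1<\Re z<0$) and $z=-1$ are indeed crossed when shifting right, resolving the confusion you flag in the middle of your sketch. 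Second, $M(0)=-\zeta'(0)/\zeta(0)=-\log 2\pi$, not $\log 2\pi$; the residue of $\Gamma(-z)$ at $z=0$ is $-1$, and together with the global sign from the shift this produces the $-(\log 2\pi)\zeta(s_1+s_2)$ term correctly. Your description of the double-pole bookkeeping at even $k$ and of the Stirling-based convergence of the $\rho_n$-series is accurate.
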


\begin{cor}[\cite{MNT2021}\ Theorem 2.2] \label{C-2-1}\ 
The list of singularities of $\Phi _2 (s_1,s_2;1,\Lambda )$ are given as follows:
%only on the subsets of $\mathbb{C}^2$ defined by one of the following equations:
\begin{align}
& s_2=1,  \label{sing-1}\\
& s_2=-l\quad (l\in \mathbb{N},\ l\geq 2),\label{sing-2}\\
& s_1+s_2=2-l\quad (l\in \mathbb{N}_0),\label{sing-3}\\
& s_2=-l+\rho_n\quad (n\in \mathbb{N},\ l\in \mathbb{N}_0),\label{sing-4}\\
&s_1+ s_2=1+\rho_n\quad (n\in \mathbb{N}).\label{sing-5}
\end{align}
\end{cor}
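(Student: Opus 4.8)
The plan is to simply read off the singularities from the explicit meromorphic continuation formula \eqref{mainthm} in Theorem \ref{state1}, treating each of its six terms in turn and collecting the poles. First I would observe that the last term, the Mellin--Barnes integral along $\Re z = N-\eta$, is holomorphic in the region where $\Re(s_1+s_2) > 2 - N + \eta$ and $\Re s_2 > -N+\eta$ (the condition making the integral converge and the contour separate the relevant poles of the integrand), so for the purpose of locating singularities we may take $N$ as large as we like and this term contributes nothing. Thus every singularity must come from one of the finitely many explicit terms.

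Next I would go through those terms one at a time. The term $\zeta(s_1+s_2-1)/(s_2-1)$ contributes the hyperplane $s_2 = 1$ (from the denominator) and $s_1+s_2-1 = 1$, i.e. $s_1+s_2 = 2$, from the pole of $\zeta$; this is the $l=0$ case of \eqref{sing-3}. The term $(\log 2\pi)\zeta(s_1+s_2)$ gives $s_1+s_2 = 1$, the $l=1$ case of \eqref{sing-3}. For each fixed odd $k$ with $1 \le k \le N-1$, the term $\binom{-s_2}{k} M(-k)\zeta(s_1+s_2+k)$ gives $s_1+s_2 = 1-k$, i.e. $s_1+s_2 = 2-l$ with $l = k+1$ even; letting $k$ range over odd integers and $N \to \infty$ this produces \eqref{sing-3} for all even $l \ge 2$. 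Similarly, for each even $k$ the bracketed term contributes $\zeta(s_1+s_2+k)$ and $\zeta'(s_1+s_2+k)$ (the latter having a double pole at $s_1+s_2 = 1-k$) together with the factor $\Gamma'(s_2+k)/\Gamma(s_2)$; since $1/\Gamma(s_2)$ cancels the trivial poles of $\Gamma(s_2+k)$ and $\Gamma'(s_2+k)$ at $s_2 = 0, -1, \dots, -k+1$, the only surviving poles from $\Gamma'(s_2+k)$ are at $s_2 = -k, -k-1, \dots$, i.e. $s_2 = -l$ with $l \ge k \ge 2$; combining with the $\zeta(s_1+s_2+k)$ factors this yields $s_1+s_2 = 1-k = 2-l$ ($l$ odd $\ge 3$) and, over all even $k$ and $N\to\infty$, the full family \eqref{sing-2}. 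Finally the zeta-zero sum $-\Gamma(s_2)^{-1}\sum_n \mathrm{ord}(\rho_n)\Gamma(s_2-\rho_n)\Gamma(\rho_n)\zeta(s_1+s_2-\rho_n)$: here $\zeta(s_1+s_2-\rho_n)$ has its pole at $s_1+s_2-\rho_n = 1$, giving \eqref{sing-5}, while $\Gamma(s_2-\rho_n)$ has poles at $s_2 - \rho_n = -l$, $l \in \mathbb{N}_0$, none of which are cancelled by $1/\Gamma(s_2)$ (as $\rho_n \notin \mathbb{Z}$), giving \eqref{sing-4}.

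The one point requiring a little care — and the main obstacle, such as it is — is bookkeeping the cancellations coming from the $1/\Gamma(s_2)$ factors and from the two ``even $k$'' pieces, to be sure that $s_2 = 0$ and $s_2 = -1$ are genuinely regular (hence the restriction $l \ge 2$ in \eqref{sing-2}) and that no spurious poles at $s_1+s_2 = 2$ or $s_1+s_2 = 1$ with higher order have been overlooked. I would handle this by checking, for each relevant hyperplane, the exact order of vanishing or pole of every factor in every term along that hyperplane; since only finitely many terms can be singular on any given hyperplane (once $N$ is fixed large enough), this is a finite check. Assembling the contributions gives exactly the list \eqref{sing-1}--\eqref{sing-5}, completing the proof. $\qed$
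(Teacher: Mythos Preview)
Your approach---reading off the singularities term by term from \eqref{mainthm} with $N$ taken large so that the integral remainder is holomorphic on any prescribed region---is correct and is exactly the standard argument; note that in the present paper Corollary~\ref{C-2-1} is simply quoted from \cite{MNT2021} (Theorem~2.2 there) as part of Section~\ref{sec-2} on known results, and the proof in \cite{MNT2021} proceeds along the lines you describe. One small inaccuracy in your write-up: $\Gamma'(s_2+k)$ has no poles at $s_2=0,-1,\dots,-k+1$ (these correspond to positive arguments of $\Gamma$), so nothing is being cancelled there; the relevant point is rather that the \emph{double} poles of $\Gamma'(s_2+k)$ at $s_2=-k,-k-1,\dots$ are reduced to simple poles by the simple zeros of $1/\Gamma(s_2)$ at those same nonpositive integers, which gives \eqref{sing-2} as you conclude.
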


Next we consider a general class of double series $\Phi _2 (s_1,s_2;1,\widetilde{\alpha} )$ defined by 
\eqref{def_double_tilde}.
Since
\begin{equation}
\frac{1}{\zeta(s)}=\sum _{n=1}^\infty \frac {\mu(n)}{n^s}\quad (\Re s>1),\label{mu-Dir}
\end{equation}
where $\mu$ is 
the M{\"o}bius function 
(see \cite[$\S$ 1.1]{Titch}), %Hence, for $\Re s>\max\{1,\delta\}$, 
%we have
%$$\frac{\Phi (s;\alpha)}{\zeta(s)}=\sum _{m=1}^\infty \sum _{n=1}^\infty\frac {{\alpha}(m)\mu(n)}{(mn)^s}.$$
from \eqref{def_tilde_alpha} we have
\begin{equation}
\widetilde{\alpha}(n)=\sum_{1\leq d \leq n \atop d\mid n}\alpha\left(\frac{n}{d}\right)\mu(d). \label{Rev-form}
\end{equation} 

\begin{lem}[\cite{MNT2021}\ Proposition 3.1] \label{Prop-region}
If $\alpha$ satisfies the condition (I) of the class $\mathcal{A}$, we see that 
$\Phi _2 (s_1,s_2;1,\widetilde{\alpha} )$ is absolutely convergent in the region
\begin{equation}
\left\{(s_1,s_2)\in \mathbb{C}^2 \mid \Re s_2>\max\{1,\delta\},\ \Re (s_1+s_2)>\max\{ 2,1+\delta\}\right\}.\label{conv-region}
\end{equation}
\end{lem}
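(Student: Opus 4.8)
The plan is to reduce the absolute convergence of the double series to that of an ordinary Dirichlet series, by bounding $m_1+m_2$ from below by whichever of $m_1,m_2$ is the larger. First I would pin down the abscissa of absolute convergence of $\Phi(s;\widetilde{\alpha})$ itself. Since $\Phi(s;\widetilde{\alpha})=\Phi(s;\alpha)\zeta(s)^{-1}$ and $\zeta(s)^{-1}=\sum_n\mu(n)n^{-s}$ by \eqref{mu-Dir}, the coefficient is the Dirichlet convolution $\widetilde{\alpha}=\alpha\ast\mu$ (equivalently \eqref{Rev-form}), so $|\widetilde{\alpha}(n)|\le\sum_{de=n}|\alpha(d)|$, whence $\sum_n|\widetilde{\alpha}(n)|n^{-\sigma}\le\zeta(\sigma)\sum_d|\alpha(d)|d^{-\sigma}<\infty$ for $\sigma>\delta':=\max\{1,\delta\}$ by condition (I). In particular, for every $\varepsilon>0$ and $x\ge 1$,
\begin{equation*}
\sum_{m\le x}|\widetilde{\alpha}(m)|\le x^{\delta'+\varepsilon}\sum_{m=1}^{\infty}|\widetilde{\alpha}(m)|m^{-\delta'-\varepsilon}\ll_{\varepsilon}x^{\delta'+\varepsilon}.
\end{equation*}

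Now write $\sigma_j=\Re s_j$, and note that on the region \eqref{conv-region} one has $\sigma_2>\delta'\ge 1>0$ and $\sigma_1+\sigma_2>\max\{2,1+\delta\}=1+\delta'$. Split the series of absolute values as $S_{\ge}+S_{<}$ according to $m_1\ge m_2$ or $m_1<m_2$. For $S_{\ge}$ use $m_1+m_2\ge m_1$ together with $\sigma_2>0$ to get $(m_1+m_2)^{-\sigma_2}\le m_1^{-\sigma_2}$, so
\begin{equation*}
S_{\ge}\le\sum_{m_1\ge 1}m_1^{-\sigma_1-\sigma_2}\sum_{m_2\le m_1}|\widetilde{\alpha}(m_2)|\ll_{\varepsilon}\sum_{m_1\ge 1}m_1^{\delta'+\varepsilon-\sigma_1-\sigma_2},
\end{equation*}
which converges whenever $\sigma_1+\sigma_2>1+\delta'+\varepsilon$; since $\varepsilon$ is arbitrarily small this holds on \eqref{conv-region}. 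For $S_{<}$ use $m_1+m_2\ge m_2$ to get $(m_1+m_2)^{-\sigma_2}\le m_2^{-\sigma_2}$, so
\begin{equation*}
S_{<}\le\sum_{m_2\ge 1}|\widetilde{\alpha}(m_2)|m_2^{-\sigma_2}\sum_{1\le m_1<m_2}m_1^{-\sigma_1}.
\end{equation*}
Bounding the inner sum by $O_{\varepsilon}(m_2^{\varepsilon})$ when $\sigma_1\ge 1$ and by $O(m_2^{1-\sigma_1})$ when $\sigma_1<1$ reduces $S_{<}$ to $\sum_{m_2}|\widetilde{\alpha}(m_2)|m_2^{-\sigma_2+\varepsilon}$ in the first case (convergent for $\sigma_2>\delta'$) and to $\sum_{m_2}|\widetilde{\alpha}(m_2)|m_2^{1-\sigma_1-\sigma_2}$ in the second (convergent for $\sigma_1+\sigma_2>1+\delta'$); both hold on \eqref{conv-region}.

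Combining $S_{\ge}<\infty$ and $S_{<}<\infty$ gives the absolute convergence of $\Phi_2(s_1,s_2;1,\widetilde{\alpha})$ on \eqref{conv-region}, which is the assertion. The only point that requires care — and the one I would double-check — is the range where $\sigma_1$ is small or negative: there the inner $m_1$-sum in $S_{<}$ genuinely grows like a positive power of $m_2$, and one must verify that this growth is exactly compensated by the hypothesis $\Re(s_1+s_2)>\max\{2,1+\delta\}$ and not by $\Re s_2>\max\{1,\delta\}$ alone. Keeping track of which of the two hypotheses is binding in which range of $\sigma_1$ is really the whole substance of the argument; the remainder is a routine comparison of Dirichlet series.
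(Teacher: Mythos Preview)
Your argument is correct. The paper itself does not prove this lemma here; it merely quotes it from the earlier work \cite{MNT2021} (as ``Proposition 3.1'' there) and records only the statement. So there is no ``paper's own proof'' in the present text to compare against. What you have written is the natural elementary route: first pin down the abscissa of absolute convergence of $\sum|\widetilde{\alpha}(n)|n^{-\sigma}$ via the convolution bound $|\widetilde{\alpha}(n)|\le\sum_{d\mid n}|\alpha(d)|$, and then split the double sum according to $m_1\gtrless m_2$ to convert the $(m_1+m_2)^{-\sigma_2}$ factor into a power of the dominant variable. Each of your estimates is valid, and the case analysis on $\sigma_1$ in the $S_<$ piece correctly identifies which of the two hypotheses is active.

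One cosmetic remark: in the range $\sigma_1>1$ the inner sum $\sum_{m_1<m_2}m_1^{-\sigma_1}$ is actually bounded (no $m_2^{\varepsilon}$ needed), while at $\sigma_1=1$ it is $\ll\log m_2$; your uniform $O_{\varepsilon}(m_2^{\varepsilon})$ covers both, but since the lemma is a pointwise statement you could simply treat $\sigma_1>1$ and $\sigma_1\le 1$ separately and avoid the $\varepsilon$. This is purely stylistic and does not affect correctness.
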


%For $\alpha\in\mathcal{A}$, 
%we assume the following condition for $\zeta(s)$.

Here we introduce the following assumption, which is plausible in view of
the Gonek-Hejhal conjecture (Gonek \cite{Gonek}, Hejhal \cite{Hejhal}).

\begin{assumption}\label{Ass-2}\ All non-trivial zeros of $\zeta(s)$ are simple 
and 
\begin{equation}\label{order:1/zeta'(rho)}
\frac 1{\zeta '(\rho _n)}=O(|{\rho _n}|^B)\quad (n\to \infty)
\end{equation}
with some constant $B> 0$. 
\end{assumption}

Let $c_k(\alpha)$ be 
%\begin{align}
%& c_h(\alpha)=\frac 1{2\pi i}\int _{|\xi +h|=\frac 1 2} \frac {\Phi(s;\alpha)}{\zeta(\xi)}\frac{1}{\xi +h} d\xi\quad (h\in \mathbb{N};\ k:\textrm{even}), \label{def-ck}
%\end{align}
the constant term of the Laurent series of 
$\Phi(s;\alpha)/\zeta (s)$ at $s=-k$ ($k\in\mathbb{N}$, $k:\mathrm{even}$). 
%We can also express that 
%\begin{align}
%&c_h=\lim_{s\to -h}\ \frac{d}{ds}\frac{s+h}{\zeta(s)}=-\frac{\zeta''(-h)}{2(\zeta'(-h))^2}\quad (h:\textrm{even}). \label{def-ck-2}
%\end{align}
Define the Bernoulli numbers $\{B_n\}_{n\geq 0}$ by
\begin{equation*}
\frac {te^t}{e^t-1}=\sum _{n=0}^\infty B_n\frac {t^n}{n!}.
\end{equation*}
Then we obtained 

\begin{thm}[\cite{MNT2021}\ (4.4)] \label{Kocchiga-Th-4-2}
Let $\alpha\in\mathcal{A}$.
Under Assumption \ref{Ass-2}, $\Phi _2(s_1,s_2;1,\widetilde{\alpha} )$ can be continued meromorphically to the whole space $\mathbb{C}^2$ by {\rm (\ref{mainthm2})} by the following expression:
\begin{align}\label{mainthm2}
&\Phi _2(s_1,s_2;1,\widetilde{\alpha} )\\
&=\frac{\Gamma(s_2-\delta)\Gamma(\delta)}{\Gamma(s_2)} \Res_{s=\delta}\ \left(\frac{\Phi(s;\alpha)}{\zeta(s)}\right)\zeta(s_1+s_2-\delta) \notag\\
& \ -2\Phi(0;\alpha)\zeta (s_1+s_2)-\sum _{k=1 \atop k:{\rm odd}}^{N-1}\binom {-s_2}k \frac {(k+1)\Phi(-k;\alpha)}{B_{k+1}}\zeta (s_1+s_2+k)\nonumber \\
&\ +\sum _{k=1\atop k:{\rm even}}^{N-1}  \bigg[ \binom {-s_2}k\bigg\{ \frac {(-1)^{k/2}2(2\pi )^{k}\Phi(-k;\alpha)}{\zeta (1+k)}\notag \\
& \ \ \times \left(b_k \zeta(s_1+s_2+k)-\frac{1}{k!}\zeta'(s_1+s_2+k)\right)+c_k(\alpha)\zeta(s_1+s_2+k)\bigg\}\nonumber \\
& \qquad -\frac{\Gamma'(s_2+k)}{\Gamma(s_2)}\frac {(-1)^{k/2}2(2\pi )^{k}\Phi(-k;\alpha)}{(k!)^2\zeta (1+k)}\zeta(s_1+s_2+k)\bigg]\notag\\
&\ +\frac 1 {\Gamma (s_2)}\sum _{n=1}^\infty \Gamma (s_2-\rho _n)\Gamma (\rho _n)\frac {\Phi(\rho_n;\alpha)}{\zeta'(\rho _n)}\zeta (s_1+s_2-\rho _n)\nonumber \\
&\ +\frac 1 {2\pi i\Gamma (s_2)}\int _{(N-\eta)}\Gamma (s_2+z)\Gamma (-z)\Phi(-z;\widetilde{\alpha})\zeta (s_1+s_2+z)dz\nonumber
\end{align}
for $N\in \mathbb N$, where $\eta$ is a small positive number, and $b_k$ is defined by \eqref{def-bk}.
\end{thm}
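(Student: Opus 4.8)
The plan is to derive \eqref{mainthm2} by essentially the same procedure that yielded Theorem \ref{state1} (the formula \eqref{mainthm}), but now tracking the arithmetic function $\widetilde{\alpha}$ through its Dirichlet series $\Phi(s;\widetilde{\alpha})=\Phi(s;\alpha)/\zeta(s)$ in place of $M(s)=-\zeta'(s)/\zeta(s)$. First I would start in the region of absolute convergence \eqref{conv-region} guaranteed by Lemma \ref{Prop-region}, write
\[
\Phi_2(s_1,s_2;1,\widetilde{\alpha})=\sum_{m_1=1}^\infty\sum_{m_2=1}^\infty\frac{\widetilde{\alpha}(m_2)}{m_1^{s_1}(m_1+m_2)^{s_2}},
\]
and apply the Mellin--Barnes formula \eqref{MB} to $(m_1+m_2)^{-s_2}=m_2^{-s_2}(1+m_1/m_2)^{-s_2}$ (or the symmetric version), so that the double sum separates into $\zeta(s_1+s_2+z)\cdot\Phi(-z;\widetilde{\alpha})$ against the Barnes kernel $\Gamma(s_2+z)\Gamma(-z)/\Gamma(s_2)$, with the contour initially at $\Re z=c$ with $-\Re s_2<c<0$. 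This is exactly the step carried out in \cite{MNT2021} for Theorem \ref{state1}; the only change is the integrand.

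Next I would shift the contour to the right, to $\Re z = N-\eta$, picking up residues at the poles of the integrand between the two contours. The residue bookkeeping is where all the structure enters: (i) the pole of $\Phi(-z;\widetilde\alpha)=\Phi(-z;\alpha)/\zeta(-z)$ coming from the pole of $\Phi(s;\alpha)$ at $s=\delta$, i.e.\ at $z=-\delta$, contributes the first line of \eqref{mainthm2} via $\Res_{s=\delta}(\Phi(s;\alpha)/\zeta(s))$; (ii) the trivial zeros of $\zeta$ in the denominator, at $s=-2k$, i.e.\ $z=2k$, produce poles of $\Phi(-z;\widetilde\alpha)$, and here one substitutes $\zeta(-2k)=0$, $\zeta'(-2k)=(-1)^k\zeta(2k+1)(2k)!/(2(2\pi)^{2k})$ together with the factorial-reciprocity identity for $\Gamma(s)$ (the constant $b_k$ of \eqref{def-bk}) and the local constant $c_k(\alpha)$ to get the even-$k$ block — this is the genuinely new computation, replacing the $a_k$-term of \eqref{mainthm} by the explicit $\Gamma$-reciprocity expression; (iii) the zero of $\zeta$ at $s=0$, giving $z=0$, produces (with the $\Gamma(-z)$ pole also at $z=0$, hence a double pole) the $-2\Phi(0;\alpha)\zeta(s_1+s_2)$ term and the odd-$k$ pole at $z=0$; (iv) the nontrivial zeros $\rho_n$ of $\zeta$, at $z=-\rho_n$ (and, when shifting, $z=-\rho_n+l$), contribute the series $\sum_n\Gamma(s_2-\rho_n)\Gamma(\rho_n)\Phi(\rho_n;\alpha)/\zeta'(\rho_n)\cdot\zeta(s_1+s_2-\rho_n)$, using Assumption \ref{Ass-2} to guarantee simplicity of the $\rho_n$ so that $1/\zeta'(\rho_n)$ makes sense; (v) the poles of $\Gamma(-z)$ at the positive integers $z=l$ combined with the zeta-functional-equation values $\Phi(-l;\widetilde\alpha)$, which via $\Phi(-l;\alpha)$ and the known special values $\zeta(-k)=-B_{k+1}/(k+1)$ give the odd-$k$ block $\binom{-s_2}{k}(k+1)\Phi(-k;\alpha)/B_{k+1}\cdot\zeta(s_1+s_2+k)$.

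After collecting these residues, the remaining integral along $\Re z=N-\eta$ is manifestly holomorphic in $(s_1,s_2)$ in the half-space $\Re(s_1+s_2)>1-N+\eta$ (the integrand being controlled by Stirling for the $\Gamma$-factors, the trivial bound for $\zeta(s_1+s_2+z)$, and Assumption \ref{Ass-2} together with condition (III) for the convergence of the $\rho_n$-series), so since $N$ is arbitrary we obtain meromorphic continuation to all of $\mathbb{C}^2$, and the displayed identity \eqref{mainthm2} holds by analytic continuation from the convergence region. The main obstacle I anticipate is the careful evaluation of the even-$k$ residues in step (ii): at $z=2k$ the integrand has, a priori, contributions from the simple pole of $\Phi(-z;\alpha)/\zeta(-z)$ (through the zero of $\zeta$), and one must expand $\zeta(-z)$, $\Gamma(s_2+z)$, and $\Gamma(-z)$ simultaneously to first order to extract the residue, which is precisely what forces the appearance of $b_k$, $c_k(\alpha)$, and the $\Gamma'(s_2+k)/\Gamma(s_2)$ term with the coefficient $(-1)^{k/2}2(2\pi)^k\Phi(-k;\alpha)/((k!)^2\zeta(1+k))$; verifying that these combine exactly into the stated form (and matching normalizations with the $a_k$-version in \eqref{mainthm} via $c_k(\alpha)$ versus $-a_k+k!b_k$) is the delicate bookkeeping that the rest is routine around.
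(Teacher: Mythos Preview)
The paper does not actually prove this statement: Theorem \ref{Kocchiga-Th-4-2} is recalled verbatim from \cite{MNT2021} in Section \ref{sec-2} (``Known results'') without argument, so there is no proof here to compare against. Your overall strategy---Mellin--Barnes representation followed by a rightward contour shift to $\Re z=N-\eta$, collecting residues---is exactly the method used in \cite{MNT2021} for both Theorem \ref{state1} and Theorem \ref{Kocchiga-Th-4-2}, and is correct in outline.

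That said, your residue bookkeeping contains several slips that would need to be fixed before the argument goes through. First, $z=0$ is a \emph{simple} pole, not a double pole: $\zeta(0)=-1/2\neq 0$, so only $\Gamma(-z)$ is singular there, and the residue is $-\Phi(0;\widetilde{\alpha})\zeta(s_1+s_2)=2\Phi(0;\alpha)\zeta(s_1+s_2)$ (the sign then matches after accounting for orientation). Second, there are no poles at ``$z=-\rho_n+l$'': the nontrivial zeros contribute poles of $1/\zeta(-z)$ only at $z=-\rho_n$, and these lie in the strip $-1<\Re z<0$, crossed once. Third, and most importantly, at even $k\geq 2$ the integrand has a genuine \emph{double} pole, because $\Gamma(-z)$ has a simple pole at every nonnegative integer and $1/\zeta(-z)$ has a simple pole at the trivial zeros $z=2,4,\dots$; it is precisely this coincidence that forces the Laurent expansion producing the three pieces $b_k$, $c_k(\alpha)$, and $\Gamma'(s_2+k)/\Gamma(s_2)$ in the even-$k$ block of \eqref{mainthm2}. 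Your final paragraph hints at this expansion but attributes the complication only to $1/\zeta(-z)$. Finally, you should also note that Assumption \ref{Ass-2} is needed not just for simplicity of the $\rho_n$ but for the polynomial bound \eqref{order:1/zeta'(rho)}, which (together with Stirling) is what makes the infinite sum over $n$ converge.
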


\begin{cor}[\cite{MNT2021}\ Theorem 4.3] \label{Th-4-2}
The list of singularities of $\Phi _2 (s_1,s_2;1,\widetilde{\alpha} )$ is given by:
\begin{equation}
\begin{split}
& s_1+s_2=1-k\quad (k\in \mathbb{N}_0),\\
& s_2=-k\quad (k\in \mathbb{N},\ k\geq 2),\\
& s_2=-l+\rho_n\quad (l\in \mathbb{N}_0,\ n\in \mathbb{N}),\\
& s_1+ s_2=1+\rho_n\quad (n\in \mathbb{N}),\\
& s_2=-l+\delta\quad  (l\in \mathbb{N}_0),\\
& s_1+ s_2=1+\delta,
\end{split}
\label{coro-2}
\end{equation}
where the last two equations are omitted when $\Phi(s;\alpha)$ has no pole or $\delta=1$. 
\end{cor}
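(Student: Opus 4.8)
The plan is to read off the list of singularities directly from the explicit meromorphic continuation \eqref{mainthm2} supplied by Theorem \ref{Kocchiga-Th-4-2}, by analysing each of its six terms separately and then taking the union of the resulting polar loci; the Mellin--Barnes integral term is handled by letting $N\to\infty$ so that its artificial singularities are pushed off to infinity. The elementary facts I would use throughout are that each $\binom{-s_2}{k}$ is a polynomial in $s_2$ (hence entire), that $1/\Gamma(s_2)$ is entire, so that a quotient of the shape $\Gamma(s_2-w)/\Gamma(s_2)$ or $\Gamma'(s_2+k)/\Gamma(s_2)$ is singular only where the numerator gamma-factor is, and that $\zeta(w)$ and $\zeta'(w)$ are holomorphic except for a pole of order $1$, resp.\ $2$, at $w=1$.

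Carrying this out term by term: the factor $\zeta(s_1+s_2)$ in the term $-2\Phi(0;\alpha)\zeta(s_1+s_2)$ and the factors $\zeta(s_1+s_2+k),\ \zeta'(s_1+s_2+k)$ in the two finite sums over $1\le k\le N-1$ account for all the loci $s_1+s_2=1-k$ with $k\in\mathbb{N}_0$. The one delicate point in the finite sums is the factor $\Gamma'(s_2+k)/\Gamma(s_2)$ (even $k\ge2$): here $\Gamma'(s_2+k)$ has a \emph{double} pole at each $s_2=-k-j$ ($j\in\mathbb{N}_0$), but every such point is a non-positive integer at which $1/\Gamma(s_2)$ has a simple zero, so only a simple pole survives, and since $k\ge2$ these points sweep out exactly the integers $\le-2$, yielding $s_2=-k$ ($k\in\mathbb{N},\ k\ge2$). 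The series over the zeros $\rho_n$ contributes, via $\Gamma(s_2-\rho_n)/\Gamma(s_2)$ and $\zeta(s_1+s_2-\rho_n)$, the loci $s_2=-l+\rho_n$ ($l\in\mathbb{N}_0$, $n\in\mathbb{N}$) and $s_1+s_2=1+\rho_n$ ($n\in\mathbb{N}$); to conclude that these are the only singularities it contributes one invokes the local uniform convergence of this series off the stated loci, which is precisely what was established in the proof of Theorem \ref{Kocchiga-Th-4-2} from Assumption \ref{Ass-2}, Stirling decay of $\Gamma(s_2-\rho_n)\Gamma(\rho_n)$, the polynomial bound (III) on $\Phi(\cdot;\alpha)$ in the strip $0\le\Re s\le1$, polynomial growth of $\zeta$, and $\#\{n:|\Im\rho_n|\le T\}=O(T\log T)$. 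Finally, the first term of \eqref{mainthm2} gives $s_1+s_2=1+\delta$ from $\zeta(s_1+s_2-\delta)$ and $s_2=-l+\delta$ ($l\in\mathbb{N}_0$) from $\Gamma(s_2-\delta)/\Gamma(s_2)$, but only when the coefficient $\Res_{s=\delta}(\Phi(s;\alpha)/\zeta(s))$ is nonzero; I would observe that this residue vanishes when $\Phi(s;\alpha)$ is entire, and also when $\delta=1$, since then $\Phi(s;\alpha)$ has at most a simple pole at $s=1$ by (II) while $1/\zeta(s)$ has a simple zero there, making the quotient holomorphic at $s=1$ — this is exactly the proviso attached to the last two lines of \eqref{coro-2}.

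It then remains to check that the Mellin--Barnes integral in the last line of \eqref{mainthm2} is holomorphic in $\{\Re s_2>-N+\eta\}\cap\{\Re(s_1+s_2)>1-N+\eta\}$: on the path $\Re z=N-\eta$ one has $\Re(-z)=-N+\eta$, a negative non-integer, so $\Phi(-z;\widetilde{\alpha})=\Phi(-z;\alpha)/\zeta(-z)$ is holomorphic there (the pole of $\Phi(\cdot;\alpha)$ at $\delta$ and the trivial zeros of $\zeta$ are avoided), $\Gamma(-z)$ is regular, and in the stated region $s_2+z$ never lies in $\mathbb{Z}_{\le0}$ and $s_1+s_2+z\ne1$; uniform convergence on compact sets comes from the exponential decay of $\Gamma(s_2+z)\Gamma(-z)$ and polynomial growth of the remaining factors along the line. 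Since $N\in\mathbb{N}$ is arbitrary, this term produces no singularities, so the union of the loci found above is exactly the list \eqref{coro-2}. The main obstacle, as indicated, is really the bookkeeping around the $\rho_n$-series together with the double-to-simple-pole reduction in $\Gamma'(s_2+k)/\Gamma(s_2)$; since the convergence of the series was already established for Theorem \ref{Kocchiga-Th-4-2}, no new analytic difficulty arises and the argument is a matter of careful inspection.
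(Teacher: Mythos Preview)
Your proposal is correct and follows precisely the approach implicit in the paper: the statement is presented there as a corollary (quoted from \cite{MNT2021}) of the explicit formula \eqref{mainthm2}, and the intended argument is exactly the term-by-term inspection you carry out, including the double-to-simple pole reduction in $\Gamma'(s_2+k)/\Gamma(s_2)$ and the vanishing of $\Res_{s=\delta}(\Phi(s;\alpha)/\zeta(s))$ when $\Phi(s;\alpha)$ is entire or $\delta=1$. No alternative route is taken in the paper, so there is nothing further to compare.
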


We define the reverse value of $\Phi _2 (s_1,s_2;\alpha_1,\alpha_2 )$ at $(u_1,u_2)$ on each singular set by 
\begin{align*}
& \Phi _2^{\rm Rev} (u_1,u_2;\alpha_1,\alpha_2  )=\lim_{s_2\to u_2}\lim_{s_1\to u_1}\Phi _2 (s_1,s_2;\alpha_1,\alpha_2  ).
\end{align*}
Let $(s)_k:=s(s+1)(s+2)\cdots (s+k-1)$. Then we obtained

\begin{prop}[\cite{MNT2021}\ Proposition 5.1] \label{spv1} Let $m,n\in \mathbb{N}_0$ with $2\mid (m+n)$ and assume $m\geq 1$ when $n\geq 2$. Then 
\begin{align*}
\Phi _2^{\rm Rev} (-m,-n;1,\Lambda )
&\ =\frac {B_{m+n+2}}{(n+1)(m+n+2)}+(\log 2\pi ) \frac{B_{m+n+1}}{m+n+1}\\
&\ \ -\sum _{k=1 \atop k:{\rm odd}}^n \binom n k \frac {k+1}{B_{k+1}}\zeta '(-k)\frac {B_{m+n-k+1}}{m+n-k+1}\\
&\ \ +\sum _{k=1\atop k:{\rm even}}^n  \binom n k \frac {B_{m+n-k+1}}{m+n-k+1}\left( -a_k+k!b_k\right) \\
& \qquad\quad -\frac{(-1)^m m!n!}{(m+n+1)!} M(-m-n-1).
\end{align*}
\end{prop}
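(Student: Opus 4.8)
The plan is to compute $\Phi_2^{\mathrm{Rev}}(-m,-n;1,\Lambda)$ by taking the iterated limit $\lim_{s_2\to -n}\lim_{s_1\to -m}$ in the meromorphic expression \eqref{mainthm} of Theorem \ref{state1}. First I would choose $N$ large, say $N-1>n$, so that every term containing $\binom{-s_2}{k}$ for $k\le N-1$ is in play and the remaining integral over $(N-\eta)$ together with the zero-sum are holomorphic near $(-m,-n)$. The inner limit $s_1\to -m$ is harmless for all terms: each term is a fixed analytic function of $s_2$ times $\zeta(s_1+s_2+c)$ or $\zeta'(s_1+s_2+c)$ for various shifts $c$, and since $2\mid(m+n)$ the arguments $-m-n+c$ land on points where $\zeta$ (or $\zeta'$) is regular except for the single term $\zeta(s_1+s_2-1)/(s_2-1)$, which at $s_1=-m$ gives $\zeta(-m-n-1)/(s_2-1)$ — still finite because $s_2\to -n\neq 1$. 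So the substitution $s_1\mapsto -m$ is a clean specialization.

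Next I would take $s_2\to -n$ term by term. The key arithmetic facts to deploy are: $\zeta(-2j)=0$ for $j\ge 1$, so $\zeta(1-m-n)=-B_{m+n}/(m+n)$ type identities via $\zeta(-r)=-B_{r+1}/(r+1)$; the binomial coefficient $\binom{-s_2}{k}\to\binom{n}{k}$, which vanishes for $k>n$, truncating the two finite sums at $k=n$ exactly as in the statement; and the factor $1/\Gamma(s_2)$ combined with $\Gamma(s_2+k)$ or $\Gamma'(s_2+k)$. The delicate point is the term $-\frac{1}{k!}\frac{\Gamma'(s_2+k)}{\Gamma(s_2)}\zeta(s_1+s_2+k)$ for even $k$: at $s_2=-n$ one has $1/\Gamma(-n)=0$, so this term contributes nothing \emph{unless} $\Gamma'(s_2+k)$ has a pole, i.e. unless $-n+k$ is a non-positive integer, i.e. $k\le n$; for $k\le n$ with $k$ even and $n$ even (forced by $2\mid(m+n)$ together with the parities in the two sums) one gets a finite nonzero product $\frac{1}{\Gamma(-n)}\Gamma'(-n+k)$. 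I would evaluate this via $\lim_{s_2\to -n}\Gamma(s_2+k)/\Gamma(s_2)=(s_2)_k\big|_{s_2=-n}$-type reasoning, more precisely using that near a pole $\Gamma(s)\sim (-1)^j/(j!\,(s+j))$, so the ratio $\Gamma'(s_2+k)/\Gamma(s_2)$ tends to $(-1)^{?}\,k!/(n!)\cdot$(residue bookkeeping). Carrying the $b_k$ constant (the constant term of $\Gamma$ at $-k$, see \eqref{def-bk}) correctly through this limit is exactly what produces the $+k!\,b_k$ inside the even sum, while the $-a_k$ comes from the constant term of $M(s)=-\zeta'/\zeta$ at $s=-k$ (see \eqref{def-ak}) appearing in the even part of \eqref{mainthm} after the $\zeta'(s_1+s_2+k)$ term is paired off.

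Finally, the term that requires the most care is the zero-sum $-\frac{1}{\Gamma(s_2)}\sum_n \mathrm{ord}(\rho_n)\Gamma(s_2-\rho_n)\Gamma(\rho_n)\zeta(s_1+s_2-\rho_n)$: at $s_2=-n$ the prefactor $1/\Gamma(s_2)$ vanishes, but $\Gamma(s_2-\rho_n)$ is regular at $s_2=-n$ (since $-n-\rho_n$ is not a non-positive integer), so the whole sum dies — \emph{except} that the term $\zeta(s_1+s_2-1)/(s_2-1)$ has hidden in it, after the path-shift producing \eqref{mainthm}, no such contribution, so actually no $M(-m-n-1)$ appears from there. The $M(-m-n-1)$ term in the conclusion must instead come from the residue at $z=s_1+s_2-1$ picked up when one shifts the Mellin--Barnes contour \emph{past} the pole of $\zeta(s_1+s_2+z)$ at $z=1-s_1-s_2$: at $(s_1,s_2)=(-m,-n)$ this is $z=m+n+1$, and since $m+n+1$ is odd, $M(-z)=M(-m-n-1)$ is finite there, contributing the residue $\frac{\Gamma(s_2+z)\Gamma(-z)}{\Gamma(s_2)}M(-z)\big|_{z=m+n+1}$, which after simplifying $\Gamma(-n+m+n+1)/\Gamma(-n)=\Gamma(m+1)/\Gamma(-n)$ and $\Gamma(-(m+n+1))$ yields precisely $-\frac{(-1)^m m!\,n!}{(m+n+1)!}M(-m-n-1)$. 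The main obstacle, then, is organizing the bookkeeping of these $\Gamma$-ratios at poles — tracking which $1/\Gamma(s_2)$ zeros are cancelled by $\Gamma$- or $\Gamma'$-poles and with what finite residue — and confirming that the parity restriction $2\mid(m+n)$ with $m\ge 1$ when $n\ge 2$ is exactly what keeps every surviving $\zeta$-argument off the pole at $1$ and every $\zeta(-2j)$ honestly zero. Once those limits are assembled, rewriting $\zeta(-r)=-B_{r+1}/(r+1)$ and $\binom{-s_2}{k}\to\binom{n}{k}$ turns the expression into the stated closed form.
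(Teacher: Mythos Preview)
Your overall strategy --- substitute $(s_1,s_2)=(-m,-n+\varepsilon)$ into \eqref{mainthm} and let $\varepsilon\to 0$ --- is the right one and matches the paper's approach (see the addendum in Section~\ref{sec-6} and the parallel computation in the proof of Theorem~\ref{Theorem-3-1}). However, there is a genuine gap in your identification of where the term $-\frac{(-1)^m m!\,n!}{(m+n+1)!}M(-m-n-1)$ comes from.

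You claim that $\binom{-s_2}{k}\to\binom{n}{k}$ truncates both finite sums at $k=n$, and that ``the arguments $-m-n+c$ land on points where $\zeta$ is regular.'' Both claims are false for one specific term. Since $m+n$ is even, $k=m+n+1$ is odd, so this value of $k$ appears in the \emph{odd} sum $\sum_{k\text{ odd}}\binom{-s_2}{k}M(-k)\zeta(s_1+s_2+k)$ of \eqref{mainthm}. At $k=m+n+1$ the factor $\zeta(s_1+s_2+k)=\zeta(1+\varepsilon)$ has a simple pole, while $\binom{n-\varepsilon}{m+n+1}$ has a simple zero at $\varepsilon=0$ (because the product contains the factor $-\varepsilon$). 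Expanding
\[
\binom{n-\varepsilon}{m+n+1}=\frac{(n-\varepsilon)\cdots(1-\varepsilon)(-\varepsilon)(-1-\varepsilon)\cdots(-m-\varepsilon)}{(m+n+1)!}
=\frac{(-1)^{m+1}m!\,n!}{(m+n+1)!}\,\varepsilon+O(\varepsilon^2)
\]
and multiplying by $M(-m-n-1)\bigl(\tfrac{1}{\varepsilon}+\gamma+O(\varepsilon)\bigr)$ yields exactly $-\frac{(-1)^m m!\,n!}{(m+n+1)!}M(-m-n-1)$ in the limit. So this term lives in the odd sum at $k=m+n+1>n$; it does \emph{not} come from shifting the Mellin--Barnes contour. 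The integral $A_6$ and the zero-sum $A_5$ in \eqref{mainthm} both vanish outright because $1/\Gamma(-n)=0$ and their remaining factors stay bounded --- no further contour shift is performed or needed.

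Two smaller remarks: (i) your assertion that $n$ must be even is incorrect (e.g.\ $m=n=1$ is allowed); (ii) the $k!b_k$ inside the even sum is already explicit in \eqref{mainthm} and is not ``produced'' by the $\Gamma'/\Gamma$ limit --- the role of the $\Gamma'/\Gamma$ term, as worked out in \eqref{3-8} and \eqref{add}, is rather to contribute a $\zeta'(-m-n+k)$ that combines with the explicit $-\zeta'(s_1+s_2+k)$ already present.
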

\begin{remark}\label{exceptionalcase}
In the above proposition, the case $m=0$, $n\geq 2$ is excluded.   In this case
we can see that $\Phi _2^{\rm Rev} (-m,-n;1,\Lambda )$ is not convergent.
See Section \ref{sec-6}.
\end{remark}

Next we consider the case $\Phi(s;\alpha)=1$, namely $\widetilde{\alpha}=\mu$.
In this case, $c_k=c_k(\alpha)$ is defined by 
\begin{align}
&c_k=\lim_{s\to -k}\ \frac{d}{ds}\frac{s+k}{\zeta(s)}=-\frac{\zeta''(-k)}{2(\zeta'(-k))^2}\quad (k:\textrm{even}). \label{def-ck-2}
\end{align}
From \eqref{mainthm2}, 
it follows that
\begin{align}
&\Phi _2(s_1,s_2;1,\mu )\label{mainthm2-2} \\
&=-2\zeta (s_1+s_2)-\sum _{k=1\atop k:{\rm odd}}^{N-1}\binom {-s_2}k \frac {(k+1)\zeta (s_1+s_2+k)}{B_{k+1}}\nonumber \\
&+\sum _{k=1\atop k:{\rm even}}^{N-1}\bigg[\binom {-s_2}k\bigg\{ \frac {(-1)^{k/2}2(2\pi )^{k}}{\zeta(1+k)}\left(b_k \zeta(s_1+s_2+k)-\frac{\zeta'(s_1+s_2+k)}{k!}\right)\notag\\
& \quad  +c_k\zeta(s_1+s_2+k)\notag\bigg\} -\frac{\Gamma'(s_2+k)}{\Gamma(s_2)}\frac {(-1)^{k/2}2(2\pi )^{k}}{(k!)^2 \zeta (1+k)}\zeta (s_1+s_2+k)\bigg]\nonumber \\
&+\frac 1 {\Gamma (s_2)}\sum _{n=1}^\infty \Gamma (s_2-\rho _n)\Gamma (\rho _n)\frac {\zeta (s_1+s_2-\rho _n)}{\zeta '(\rho _n)}\nonumber \\
&+\frac 1 {2\pi i\Gamma (s_2)}\int _{(N-\eta)}\Gamma (s_2+z)\Gamma (-z)\frac {\zeta (s_1+s_2+z)}{\zeta (-z)}dz \notag
\end{align}
for $N\in \mathbb N$, where $\eta$ is a small positive number, $b_k$ and $c_k$ are defined by \eqref{def-bk} and \eqref{def-ck-2} (see \cite[(6.2)]{MNT2021}). 

Since $\Phi(s;\alpha)=1$ has no pole, \eqref{coro-2} in this case implies
\begin{equation*}
\begin{split}
& s_1+s_2=1-k\quad (k\in \mathbb{N}_0),\\
& s_2=-k\quad (k\in \mathbb{N},\ k\geq 2),\\
& s_2=-l+\rho_n\quad (l\in \mathbb{N}_0,\ n\in \mathbb{N}),\\
& s_1+ s_2=1+\rho_n\quad (n\in \mathbb{N}).
\end{split}
\end{equation*}
We can calculate, similar to Proposition \ref{spv1}, the reverse value of 
$\Phi _2(s_1,s_2;1,\mu )$ at $(s_1,s_2)=(-m,-n)$ for $m,n\in \mathbb{N}_0$, 
$2\mid (m+n)$ and the additional condition $m\geq 1$ when $n\geq 2$.  
Also, similar to Remark \ref{zzz}, $\Phi_2(s_1,s_2;1,\mu )$ is not
convergent if $m=0$, $n\geq 2$.

%%%%%%%%%%%%%%%%%%%%%%%%%%%%%%%%%%%%%%%%%%%%%%%%%
\section{Analytic behavior of $\Phi _2 (s_1,s_2;1,\Lambda )$ at nonpositive integer points}\label{sec-3}
%%%%%%%%%%%%%%%%%%%%%%%%%%%%%%%%%%%%%%%%%%%%%%%%%%

In this section, we observe the analytic behavior of 
$\Phi _2 (s_1,s_2;1,\Lambda )$ %and $\Phi _2 (s_1,s_2;1,\widetilde{\alpha} )$
 at $(s_1,s_2)=(-m,-n)$ for $m\in \mathbb{Z}$ and $n \in \mathbb{N}_{0}$ with 
$m \not\equiv n \mod 2$. In fact, we aim to confirm whether the reverse values $\Phi _2^{\rm Rev} (-m,-n;1,\Lambda )$ %and $\Phi _2^{\rm Rev} (-m,-n;1,\widetilde{\alpha} )$
at these points are convergent or not. For this aim, it is necessary to observe their analytic behavior at these points.

We prove the following.

\begin{thm}\label{Theorem-3-1}\ Let $m\in \mathbb{Z}$, $n\in \mathbb{N}_{0}$ and $\ell \in \mathbb{N}$ with $m+n=2\ell-1$. \\
If $m\geq 0$, then
\begin{align}\label{3-1}
&\Phi_2(-m,-n+\varepsilon;1,\Lambda)
  = \frac{1}{\varepsilon}\bigg\{ 
(-1)^n\frac{m!n!}{(m+n+1)!} 
\\
& \qquad +\sum_{k=1 \atop k:\,\text{\rm even}}^{n}\binom{n}{k}\zeta(-m-n+k)\bigg\}+O(1)\quad (\varepsilon \to 0). \notag
\end{align}
If $m \leq -1$, then 
\begin{align}
\Phi_2&(-m,-n+\varepsilon;1,\Lambda) \label{Kocchiga-3-2}\\
& = \frac{2}{\varepsilon^2}\binom{n}{2\ell}+\frac{1}{\varepsilon}
\bigg[ \binom{n}{2\ell}\left\{(-a_{2\ell}+(2\ell)!b_{2\ell})-\sum_{j=0}^{2\ell-1}\frac{1}{n-j}\right\}\notag\\
& \quad +\sum_{k=1 \atop k:\,\text{\rm even}}^{n}\binom{n}{k}\zeta(-m-n+k)+\binom{n}{2\ell}\gamma-\frac{C(2\ell,n)}{(2\ell)!}\bigg] \notag\\
& \quad +O(1)\quad (\varepsilon \to 0), \notag
\end{align}
where $\gamma$ is the Euler constant and $C(2\ell,n)$ is the constant defined in \eqref{3-8} determined by the gamma function (see below).
\end{thm}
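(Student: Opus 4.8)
The plan is to substitute $s_1 = -m$ and $s_2 = -n + \varepsilon$ into the meromorphic expression \eqref{mainthm} from Theorem~\ref{state1}, and then carefully expand each of the six terms as a Laurent series in $\varepsilon$, collecting the pole contributions. The key structural point is that at $s_1 + s_2 = -m - n + \varepsilon = -(2\ell-1) + \varepsilon$, the factor $\zeta(s_1 + s_2 + k)$ is evaluated at argument $-(2\ell-1) + k + \varepsilon$, which hits the pole of $\zeta$ at $1$ precisely when $k = 2\ell$; otherwise it is finite. Simultaneously, the binomial coefficient $\binom{-s_2}{k} = \binom{n - \varepsilon}{k}$ vanishes to first order in $\varepsilon$ when $k \geq n+1$ (because $n - \varepsilon$ is then near a nonnegative integer less than $k$), and is $O(1)$ otherwise; this is the source of the different behavior according to whether $m \geq 0$ (so $2\ell = m+n+1 > n$, hence $\binom{n-\varepsilon}{2\ell}$ is $O(\varepsilon)$) or $m \leq -1$ (so $2\ell \leq n$, hence $\binom{n-\varepsilon}{2\ell} = \binom{n}{2\ell} + O(\varepsilon)$ is $O(1)$). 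I would isolate these two mechanisms term by term.

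First I would handle the ``leading'' terms: $\frac{\zeta(s_1+s_2-1)}{s_2-1}$ contributes $\frac{\zeta(-m-n-1+\varepsilon)}{-n-1+\varepsilon}$, which is $O(1)$ since $-m-n-1 = -2\ell$ is a trivial zero of $\zeta$ (even negative integer), so no pole arises here; similarly $-(\log 2\pi)\zeta(s_1+s_2)$ is $O(1)$. The odd-$k$ sum $\sum_{k \text{ odd}} \binom{-s_2}{k} M(-k)\zeta(s_1+s_2+k)$ has $\zeta$ evaluated at $-(2\ell-1)+k+\varepsilon$ with $k$ odd, so the argument is even; it equals $1$ only impossibly here, and the only way to get $\zeta$-argument $=1$ needs $k = 2\ell$ which is even — so this entire sum is $O(1)$ unless a term $M(-k)$ blows up, but $k$ odd avoids the trivial zeros of $\zeta$ in the denominator of $M$. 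The crucial pole-producing pieces are in the even-$k$ sum and come from (a) the term with $k = 2\ell$, where $\zeta(s_1+s_2+k) = \zeta(1+\varepsilon) = \frac{1}{\varepsilon} + \gamma + O(\varepsilon)$, combined with the factor $\binom{n-\varepsilon}{2\ell}$ and with $\zeta'(1+\varepsilon) = -\frac{1}{\varepsilon^2} + O(1)$, producing the $\varepsilon^{-2}$ term in the $m \leq -1$ case; and (b) the terms $-\frac{1}{k!}\frac{\Gamma'(s_2+k)}{\Gamma(s_2)}\zeta(s_1+s_2+k)$ where, since $s_2 = -n+\varepsilon$ is near the pole of $\Gamma$, the ratio $\Gamma(s_2+k)/\Gamma(s_2)$ has a simple pole contribution when $k \le n$ as well — I would use $\frac{1}{\Gamma(s_2)} = \frac{1}{\Gamma(-n+\varepsilon)} = (-1)^n n!\,\varepsilon + O(\varepsilon^2)$ together with the Laurent expansion of $\Gamma'(s_2+k)$ (regular unless $s_2 + k$ is a nonpositive integer, i.e. $k \le n$). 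I would also track the $\zeta(s_1+s_2+k)$ terms with $k$ even, $k \ne 2\ell$, $k \le n$: these are finite values $\zeta(-m-n+k)$, and multiplied by $\binom{n}{k}$ they give the sum $\sum_{k \text{ even}}^n \binom{n}{k}\zeta(-m-n+k)$ appearing in both formulas — but wait, these are $O(1)$, so they should only enter at the $\varepsilon^{-1}$ level via interaction with the $O(\varepsilon)$ and $O(\varepsilon^{-1})$ factors; I expect the bookkeeping here, specifically matching the coefficient of $1/\varepsilon$, to be the main obstacle.

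For the remaining two terms — the sum over zeros $-\frac{1}{\Gamma(s_2)}\sum_n \mathrm{ord}(\rho_n)\Gamma(s_2-\rho_n)\Gamma(\rho_n)\zeta(s_1+s_2-\rho_n)$ and the Mellin--Barnes integral $\frac{1}{2\pi i \Gamma(s_2)}\int_{(N-\eta)}\cdots dz$ — I would argue they are $O(1)$ (indeed $O(\varepsilon)$ or better) because the prefactor $1/\Gamma(s_2) = O(\varepsilon)$ kills them, provided $\Gamma(s_2 - \rho_n)$ and the integrand stay bounded near $s_2 = -n$, which holds since $\rho_n$ is not a nonpositive integer and $N$ is chosen with $N > 2\ell$ so that the integration line avoids the relevant pole. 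Then I would define the constant $C(2\ell, n)$ via the Laurent expansion of $\Gamma'(s_2 + 2\ell)/\Gamma(s_2)$ at $s_2 = -n$ (equation \eqref{3-8}, to be written just below), absorbing the finite gamma-function contributions that accompany the $\binom{n}{2\ell}$ pole, and assemble the pieces. The hardest part will be the delicate cancellation and combination in the $m \leq -1$ case: the $k = 2\ell$ term produces both a genuine double pole $\frac{2}{\varepsilon^2}\binom{n}{2\ell}$ (from $\binom{n-\varepsilon}{2\ell} \cdot \frac{-1}{k!}\cdot(-\frac{1}{\varepsilon^2}) \cdot (\text{sign/constant factors})$, reconciling the $(-1)^{k/2}$ and $b_k$ structure with the pure $\Lambda$-case constants $a_k, b_k$) and a simple pole whose coefficient mixes $-a_{2\ell} + (2\ell)! b_{2\ell}$, the derivative $\frac{d}{d\varepsilon}\binom{n-\varepsilon}{2\ell}\big|_0 = -\binom{n}{2\ell}\sum_{j=0}^{2\ell-1}\frac{1}{n-j}$, the Euler constant from $\zeta(1+\varepsilon)$, and the gamma-derivative constant $C(2\ell,n)$ — verifying that these are exactly the five bracketed terms in \eqref{Kocchiga-3-2}, and that in the $m \ge 0$ case the $\varepsilon^{-2}$ term and all the $a_{2\ell}, b_{2\ell}, \gamma, C$ contributions drop out because $\binom{n - \varepsilon}{2\ell} = O(\varepsilon)$ there, leaving only \eqref{3-1}, is the computation I would carry out with the most care.
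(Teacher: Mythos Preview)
Your proposal is correct and follows essentially the same route as the paper: you substitute $(s_1,s_2)=(-m,-n+\varepsilon)$ into \eqref{mainthm}, dispose of the first three pieces and the zero-sum and integral (the latter two via $1/\Gamma(-n+\varepsilon)=O(\varepsilon)$), and then break the even-$k$ sum into the $(-a_k+k!b_k)\zeta$ part, the $\zeta'$ part, and the $\Gamma'/\Gamma$ part, isolating the $k=2\ell$ term in each and using the Laurent expansion of $\Gamma'(s_2+k)/\Gamma(s_2)$ at $s_2=-n$ to produce the sum $\sum_{k\text{ even}}^{n}\binom{n}{k}\zeta(-m-n+k)$ and to define $C(2\ell,n)$. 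The paper labels these three sub-pieces $A_{41}$, $A_{42}$, $A_{43}$ and carries out exactly the computation you sketch; your identification of the case split via whether $2\ell>n$ or $2\ell\leq n$ (i.e.\ $m\geq 0$ versus $m\leq -1$) through the vanishing or nonvanishing of $\binom{n-\varepsilon}{2\ell}$ at $\varepsilon=0$ is the same mechanism the paper uses. (Your parenthetical about ``$(-1)^{k/2}$ and $b_k$ structure'' belongs to the $\widetilde{\alpha}$ formula \eqref{mainthm2}, not to \eqref{mainthm}; for the $\Lambda$ case you just work directly with $a_k,b_k$.)
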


\begin{proof}
First we set $(s_1,s_2)=(-m,-n+\varepsilon)$ for a small $\varepsilon>0$ in \eqref{mainthm}. Then 
\begin{align} \label{3-3}
&\Phi _2 (-m,-n+\varepsilon;1,\Lambda ) \\
&=\frac {\zeta (-m-n-1+\varepsilon)}{-n-1+\varepsilon}-(\log 2\pi ) \zeta (-m-n+\varepsilon)\notag\\
&\ +\sum ^{N-1}_{k=1 \atop k:{\rm odd}} \binom {n-\varepsilon}k M(-k)\zeta (-m-n+k+\varepsilon)\nonumber \\
&\ -\sum ^{N-1}_{k=1 \atop k:{\rm even}} \bigg[ \binom {n-\varepsilon}k\left\{ \left( -{a_k}+k!b_k\right)\zeta (-m-n+k+\varepsilon)-\zeta'(-m-n+k+\varepsilon)\right\} \nonumber \\
& \qquad\qquad -\frac {1}{k!}\frac{\Gamma'(-n+k+\varepsilon)}{\Gamma(-n+\varepsilon)}\zeta(-m-n+k+\varepsilon)\bigg]\nonumber\\
&\ -\frac{1}{\Gamma (-n+\varepsilon)}\sum _{h=1}^\infty {\rm ord}(\rho _h)\Gamma (-n-\rho _h+\varepsilon)\Gamma (\rho _h)\zeta (-m-n-\rho_h+\varepsilon)\nonumber  \\
&\ +\frac{1}{2\pi i\Gamma (-n+\varepsilon)}\int_{(N-\eta)}\Gamma (-n+z+\varepsilon)\Gamma (-z)M(-z)\zeta (-m-n+z+\varepsilon)dz\nonumber \\
& = A_1-A_2+A_3-A_4-A_5+A_6, \notag
\end{align}
say. Since $1/\Gamma(-n+\varepsilon) \to 0$ as $\varepsilon \to 0$, we see that $A_5$ and $A_6$ tend to $0$. Also, letting $\varepsilon \to 0$, we have
\begin{align*}
&A_1 \to \frac {\zeta (-m-n-1)}{-n-1}=\frac {\zeta (-2\ell)}{-n-1}=0,\\
&A_2 \to (\log 2\pi)\zeta(1-2\ell),\\
&A_3 \to \sum ^{N-1}_{k=1 \atop k:{\rm odd}} \binom {n}{k} M(-k)\zeta (1-2\ell+k),
\end{align*}
which are convergent.   The only remaining part is
\begin{align*}
A_4&=\sum ^{N-1}_{k=1 \atop k:{\rm even}} \binom {n-\varepsilon}k \left( -{a_k}+k!b_k\right)\zeta (-m-n+k+\varepsilon)\\
& \quad -\sum ^{N-1}_{k=1 \atop k:{\rm even}}\binom{n-\varepsilon}{k} \zeta'(-m-n+k+\varepsilon) \nonumber \\
& \quad -\sum ^{N-1}_{k=1 \atop k:{\rm even}} \frac {1}{k!}\frac{\Gamma'(-n+k+\varepsilon)}{\Gamma(-n+\varepsilon)}\zeta(-m-n+k+\varepsilon)\nonumber\\
& =A_{41}-A_{42}-A_{43},
\end{align*}
say, where we choose a sufficiently large $N$ such as $2\ell\leq N-1$. 

As for $A_{41}$, since $m+n=2\ell-1$, we can see that 
\begin{align}
A_{41}&=\binom {n-\varepsilon}{2\ell} \left( -a_{2\ell}+(2\ell)!b_{2\ell}\right)\zeta (1+\varepsilon)+O(1)  \label{3-4}\\
&=\binom {n-\varepsilon}{2\ell} \left( -a_{2\ell}+(2\ell)!b_{2\ell}\right)\left(\frac{1}{\varepsilon}+\gamma+O(\varepsilon)\right)+O(1)\notag\\
&=\frac{1}{\varepsilon}\binom {n}{2\ell} \left( -a_{2\ell}+(2\ell)!b_{2\ell}\right)+O(1)\quad (\varepsilon \to 0).\notag
\end{align}
%where $\gamma$ is the Euler constant.
We remark that,
if $m\geq 0$ (namely $n\leq 2\ell-1$), then $\binom{n}{2\ell}=0$ and hence
$A_{41}=O(1)$.

As for $A_{42}$, we can similarly see that
\begin{align*}
A_{42}&=\binom {n-\varepsilon}{2\ell}\zeta'(1+\varepsilon)+O(1)\\
  & =\frac{(n-\varepsilon)(n-1-\varepsilon)\cdots(n-2\ell+1-\varepsilon)}{(2\ell)!}\left(-\frac{1}{\varepsilon^2}+O(1)\right)+O(1).\notag
\end{align*}
For the case $n\geq 2\ell$, namely $m\leq -1$, we can see that
\begin{align}  \label{3-5}
A_{42}&=-\frac{1}{\varepsilon^2}\binom {n}{2\ell} +\frac{1}{\varepsilon}\binom {n}{2\ell}\left(\sum_{j=0}^{2\ell-1}\frac{1}{n-j}\right)+O(1)\quad (\varepsilon \to 0). 
\end{align}
For the case $n\leq 2\ell-1$, namely $m \geq 0$, we have
\begin{align*}
\binom{n-\varepsilon}{2\ell}&=\frac{(n-\varepsilon)\cdots(1-\varepsilon)(-\varepsilon)(-1-\varepsilon)\cdots(n-2\ell+1-\varepsilon)}{(2\ell)!}\\
& =(-\varepsilon)\frac{n!(-1)^{2\ell-n-1}(2\ell-n-1)!}{(2\ell)!}(1+O(\varepsilon))\\
& =\varepsilon(-1)^n\frac{n!(2\ell-n-1)!}{(2\ell)!}(1+O(\varepsilon)).
\end{align*}
Therefore we have
\begin{align}
A_{42}&=-\frac{(-1)^n}{\varepsilon}\frac{n!(2\ell-n-1)!}{(2\ell)!}+O(1)\quad (\varepsilon \to 0). \label{3-6}
\end{align}

Finally we consider 
$$A_{43}=\sum ^{N-1}_{k=1 \atop k:{\rm even}} \frac {1}{k!}\frac{\Gamma'(-n+k+\varepsilon)}{\Gamma(-n+\varepsilon)}\zeta(-m-n+k+\varepsilon).$$
When $k\geq n+1$, we have $\Gamma'(-n+k+\varepsilon)=O(1)$ $(\varepsilon \to 0)$. Hence, by $1/ \Gamma(-n+\varepsilon) \to 0$, we have
\begin{align}\label{333}
\frac{\Gamma'(-n+k+\varepsilon)}{\Gamma(-n+\varepsilon)}\zeta(-m-n+k+\varepsilon)=O(1)\quad (\varepsilon \to 0).
\end{align}
Note that this includes the case $k=m+n+1$ (because the pole of the zeta
factor is cancelled by the gamma factor).

When $k\leq n$, we quote a result given in \cite[(2.18)]{MNT2021}:
\begin{equation}\label{3-7}
\frac{\Gamma'(s+k)}{\Gamma(s)}=\frac{(-1)^{k-1}(k+h)!}{h!}\frac{1}{s+k+h}+O(1) \quad (s\to -k-h)
\end{equation}
for even $k\geq 2$ and $h \in \mathbb{N}_{0}$. Letting $s=-n+\varepsilon$ and $h=n-k\geq 0$ in \eqref{3-7}, we can write
\begin{equation}\label{3-8}
\frac{\Gamma'(-n+k+\varepsilon)}{\Gamma(-n+\varepsilon)}=\frac{1}{\varepsilon}\frac{(-1)^{k-1}n!}{(n-k)!}+C(k,n)+O(\varepsilon) \quad (\varepsilon \to 0)
\end{equation}
with a constant $C(k,n)$. 
The zeta factor $\zeta(-m-n+k+\varepsilon)$ remains finite unless 
$k=2\ell$, while for $k=2\ell$ (which occurs when $2\ell\leq n$), we have
\begin{equation}\label{3-9}
\zeta(-m-n+k+\varepsilon)=\zeta(1+\varepsilon)=\frac{1}{\varepsilon}+\gamma+O(\varepsilon)\quad (\varepsilon \to 0).
\end{equation}
. 

Summarizing the above results, we obtain the following.\\
For the case $n\leq 2\ell-1$, namely $m\geq 0$, we obtain from 
\eqref{333} and \eqref{3-8} that
\begin{align}\label{3-10}
A_{43}& = \sum ^{n}_{k=1 \atop k:{\rm even}} \frac {1}{k!}\left(\frac{1}{\varepsilon}\frac{(-1)^{k-1}n!}{(n-k)!}\right)\zeta(1-2\ell+k+\varepsilon)+O(1)\\
& =\frac{1}{\varepsilon} \sum^{n}_{k=1 \atop k:{\rm even}} (-1)^{k-1}\binom{n}{k}\zeta(1-2\ell+k)+O(1)\quad (\varepsilon \to 0). \notag
\end{align}
For the case $n\geq 2\ell$, namely $m\leq -1$, we obtain from \eqref{333}, \eqref{3-8} and \eqref{3-9} that
\begin{align}\label{3-11}
A_{43}& = \sum ^{n}_{k=1 \atop {k:{\rm even} \atop k\neq 2\ell}} \frac {1}{k!}\left(\frac{1}{\varepsilon}\frac{(-1)^{k-1}n!}{(n-k)!}\right)\zeta(1-2\ell+k+\varepsilon)\\
&\ \ +\frac{1}{(2\ell)!}\left(\frac{1}{\varepsilon}\frac{(-1)^{2\ell-1}n!}{(n-2\ell)!}+C(2\ell,n)+O(\varepsilon)\right)\notag\\
& \qquad \times \left(\frac{1}{\varepsilon}+\gamma+O(\varepsilon)\right)+O(1) \notag\\
& = -\frac{1}{\varepsilon^2}\binom{n}{2\ell} \notag\\
&\ \  -\frac{1}{\varepsilon}\bigg(\sum ^{n}_{k=1 \atop {k:{\rm even} \atop k\neq 2\ell}} \binom{n}{k}\zeta(1-2\ell+k)+\binom{n}{2\ell}\gamma-\frac{C(2\ell,n)}{(2\ell)!}\bigg)\notag\\
&\ \ +O(1)\quad (\varepsilon \to 0). \notag
\end{align}
Combining \eqref{3-4}-\eqref{3-6}, \eqref{3-10} and \eqref{3-11}, we obtain the proof of Theorem \ref{Theorem-3-1}. 
\end{proof}

\begin{remark}\label{Remark-3-2}\ 
As mentioned before, the condition $n \geq 2\ell$ implies $m \leq -1$. Let $h=-m\in \mathbb{N}$. %Then the condition that $m+n$ is odd positive implies that $n-h$ is odd positive. 
Then
\eqref{Kocchiga-3-2} in Theorem \ref{Theorem-3-1} shows that for $h,\,n\in \mathbb{N}$ such that $n-h$ is odd positive, $\Phi_2^{\rm rev}(h,-n;1,\Lambda)$ is clearly not convergent.

On the other hand, the condition $n \leq 2\ell-1$ implies that $m \geq 0$. 
Hence 
\eqref{3-1} in Theorem \ref{Theorem-3-1} shows that for $m,\,n\in \mathbb{Z}_{\geq 0}$ such that $m+n$ is odd positive, $\Phi_2^{\rm rev}(-m,-n;1,\Lambda)$ is convergent if and only if the residue 
\begin{align}\label{3-12}
%(-1)^n\frac{n! (2\ell-1-n)!}{(2\ell)!} +\sum_{k=1 \atop k:\,\text{\rm even}}^{n}\binom{n}{k}\zeta(1-2\ell+k)\\
R(-m,-n):=(-1)^n\frac{m! n!}{(m+n+1)!} +\sum_{k=1 \atop k:\,\text{\rm even}}^{n}\binom{n}{k}\zeta(k-m-n)
\end{align}
equals to zero. We will give further results on the values of
$R(-m,-n)$ in the next section.
\end{remark}

%%%%%%%%%%%%%%%%%%%%%%%%%%%%%%%%%%%%%%
\section{Reciprocity relations for residues}\label{sec-4}
%%%%%%%%%%%%%%%%%%%%%%%%%%%%%%%%%%%%%%

In this section, we will evaluate the values of $R(-m,-n)$ in several cases.
In particular, we will prove certain reciprocity relation between
$R(-m,-n)$ and $R(-n,-m)$ 
for $m,n \in \mathbb{N}$ with $2\nmid (m+n)$. 
Since 
\begin{align}\label{zzz}
\zeta(1-r)=(-1)^{r-1}B_{r}/r \qquad(r\in \mathbb{N})
\end{align}
(see \cite[Chapter II]{Titch}), $R(-m,-n)$ defined by \eqref{3-12} can also be written as
\begin{align}\label{4-1}
R(-m,-n)%=(-1)^n\frac{m!n!}{(m+n+1)!} +\sum_{k=1 \atop k:\,\text{\rm even}}^{n}\binom{n}{k}\zeta(-m-n+k)\\
=(-1)^n\frac{m!n!}{(m+n+1)!} -\sum_{j=1}^{[n/2]}\binom{n}{2j}\frac{B_{m+n+1-2j}}{m+n+1-2j}.
\end{align}
We first prove the following.
\begin{thm}\label{Theorem-4-1}\ 
For $N\in \mathbb{N}_{0}$,
\begin{align}\label{4-2}
& R(-1,-2N)=
\begin{cases}
0 & (N\geq 1),\\
\frac{1}{2} & (N=0).
\end{cases}
\end{align}
Hence $\Phi_2^{\rm Rev}(-1,-2N;1,\Lambda)$ is convergent for $N\geq 1$ and $\Phi_2^{\rm Rev}(-1,0;1,\Lambda)$ is not convergent. 
\end{thm}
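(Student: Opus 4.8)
The plan is to compute $R(-1,-2N)$ directly from its closed form and then read off the convergence statement from Remark~\ref{Remark-3-2}. Putting $m=1$, $n=2N$ in \eqref{4-1}, the first term simplifies to $\frac{(2N)!}{(2N+2)!}=\frac{1}{(2N+1)(2N+2)}$, so
\[
R(-1,-2N)=\frac{1}{(2N+1)(2N+2)}-\sum_{j=1}^{N}\binom{2N}{2j}\frac{B_{2N+2-2j}}{2N+2-2j}.
\]
I would reindex the sum by $i=N+1-j$, observe that $\binom{2N}{2j}=\binom{2N}{2i-2}$, and use the elementary identity $\frac{1}{2i}\binom{2N}{2i-2}=\frac{2i-1}{(2N+1)(2N+2)}\binom{2N+2}{2i}$ to extract the common factor, which gives
\[
R(-1,-2N)=\frac{1}{(2N+1)(2N+2)}\left(1-\sum_{i=1}^{N}(2i-1)\binom{2N+2}{2i}B_{2i}\right).
\]
For $N=0$ the inner sum is empty, which already yields $R(-1,0)=\tfrac12$; so the theorem is reduced to the single combinatorial identity $S_N:=\sum_{i=1}^{N}(2i-1)\binom{2N+2}{2i}B_{2i}=1$ for all $N\ge 1$.

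To prove $S_N=1$ I would split $2i-1=(2N+1)-(2N+2-2i)$ and apply the absorption identity $(2N+2-2i)\binom{2N+2}{2i}=(2N+2)\binom{2N+1}{2i}$, so that
\[
S_N=(2N+1)\sum_{i=1}^{N}\binom{2N+2}{2i}B_{2i}-(2N+2)\sum_{i=1}^{N}\binom{2N+1}{2i}B_{2i}.
\]
Both inner sums are evaluated by the classical identity $\sum_{k}\binom{n+1}{k}B_k=\frac{n-1}{2}$ (the sum over even integers $k$ with $2\le k\le n$, valid for $n\ge1$), which follows at once from the Bernoulli recurrence $\sum_{k=0}^{n}\binom{n+1}{k}B_k=0$ together with $B_{2j+1}=0$ for $j\ge1$. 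Taking $n=2N+1$ gives $\sum_{i=1}^{N}\binom{2N+2}{2i}B_{2i}=N$, and taking $n=2N$ gives $\sum_{i=1}^{N}\binom{2N+1}{2i}B_{2i}=\frac{2N-1}{2}$, whence $S_N=(2N+1)N-(N+1)(2N-1)=1$ after expanding. (Alternatively one can keep the zeta values in \eqref{3-12} and invoke the reflection formula \eqref{zzz}; the Bernoulli-recurrence route is merely the shortest.)

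With $R(-1,-2N)=0$ for $N\ge1$ and $R(-1,0)=\tfrac12$ in hand, the convergence assertion follows from Remark~\ref{Remark-3-2}: since $m=1\ge0$ and $m+n=2N+1$ is an odd positive integer, \eqref{3-1} shows $\Phi_2(-1,-2N+\varepsilon;1,\Lambda)=\varepsilon^{-1}R(-1,-2N)+O(1)$ as $\varepsilon\to0$, and this has a finite limit precisely when $R(-1,-2N)=0$. Hence $\Phi_2^{\rm Rev}(-1,-2N;1,\Lambda)$ converges for $N\ge1$, while $\Phi_2^{\rm Rev}(-1,0;1,\Lambda)$ does not.

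The one genuinely non-routine point is the evaluation of $S_N$: the argument hinges on choosing the decomposition $2i-1=(2N+1)-(2N+2-2i)$ so that, after absorption, both binomial--Bernoulli sums have their Bernoulli index equal to the lower binomial argument and therefore collapse to the standard recurrence. Everything else --- the reindexing, the extraction of the factor $\frac{1}{(2N+1)(2N+2)}$, and the closing arithmetic --- is mechanical.
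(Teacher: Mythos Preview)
Your argument is correct, but it takes a different route from the paper's. The paper proves Theorem~\ref{Theorem-4-1} by first rewriting $R(-1,-2N)$ in the form \eqref{rrr} and then invoking the Saalsch\"utz--Gelfand reciprocity relation (Lemma~\ref{Lemma-4-2}) with $(p,q)=(2N,1)$, which immediately collapses the sum. You instead bypass Lemma~\ref{Lemma-4-2} entirely: after extracting the factor $\frac{1}{(2N+1)(2N+2)}$, you reduce the claim to the identity $S_N=1$ and evaluate $S_N$ by splitting $2i-1=(2N+1)-(2N+2-2i)$, applying binomial absorption, and appealing only to the elementary Bernoulli recurrence. The paper's approach is shorter once Lemma~\ref{Lemma-4-2} is granted, and it sets up the machinery reused in Theorem~\ref{Theorem-4-3}; your approach is more self-contained, needing no external reciprocity result. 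One minor remark: the recurrence you quote, $\sum_{k=0}^{n}\binom{n+1}{k}B_k=0$, is written in the $B_1=-\tfrac12$ convention, whereas the paper uses $\frac{te^t}{e^t-1}$ (so $B_1=+\tfrac12$); this is harmless since only $B_{2i}$ with $i\geq 1$ enter $S_N$ and the intermediate identity $\sum_{2\leq k\leq n,\,k\text{ even}}\binom{n+1}{k}B_k=\frac{n-1}{2}$ holds in either convention, but you may want to phrase it accordingly.
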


In order to prove this result, we recall the following reciprocity relation among Bernoulli numbers. This result was first proved by Saalsch\"utz \cite{Saal1892}, and was recovered by Gelfand \cite{Gelfand1967} (see also Agoh-Dilcher \cite{AgohD2008} for the details).

\begin{lem}\label{Lemma-4-2}\ 
For $p,q\in \mathbb{N}_{ 0}$, 
\begin{align}\label{4-3}
& (-1)^{p+1}\sum_{l=0}^{q}\binom{q}{l}\frac{B_{p+1+l}}{p+1+l}+(-1)^{q+1}\sum_{l=0}^{p}\binom{p}{l}\frac{B_{q+1+l}}{q+1+l}\\
&\quad  =\frac{p!q!}{(p+q+1)!}. \notag
\end{align}
\end{lem}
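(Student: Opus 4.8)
The plan is to prove \eqref{4-3} by packaging both sides into a double exponential generating function and reducing the identity to an elementary relation between three explicit analytic functions. A preliminary point that must be settled is the normalisation of the Bernoulli numbers: testing \eqref{4-3} at $(p,q)=(1,0)$ shows that it holds only for the classical values $B_n=B_n(0)$ (so that $B_1=-\tfrac12$), which differ from the convention fixed in Section~\ref{sec-1} only in the single value $B_1$. Since every index occurring in the application \eqref{4-1} is even, this discrepancy has no effect on any consequence drawn from the lemma, but it is the one genuinely delicate point, and I would flag it explicitly. Throughout I take $B_n=B_n(0)$.

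First I would record the integral representation
\[
\frac{B_{m+1}}{m+1}=\int_0^1 (x-1)\,B_m(x)\,dx\qquad(m\in\mathbb{N}_0),
\]
where $B_m(x)$ is the $m$-th Bernoulli polynomial. This follows from $B_m(x)=\frac{d}{dx}\frac{B_{m+1}(x)}{m+1}$ by integration by parts: the factor $x-1$ annihilates the boundary contribution at $x=1$ and extracts $B_{m+1}(0)$ at $x=0$, while $\int_0^1 B_{m+1}(x)\,dx=0$. Applying it termwise to $\Sigma(p,q):=\sum_{l=0}^q\binom{q}{l}\frac{B_{p+1+l}}{p+1+l}$ gives $\Sigma(p,q)=\int_0^1(x-1)\sum_{l=0}^q\binom{q}{l}B_{p+l}(x)\,dx$.

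Next I would compute the double generating function $\mathcal S(s,t):=\sum_{p,q\ge0}\Sigma(p,q)\frac{s^p}{p!}\frac{t^q}{q!}$ in closed form. Writing $G_x(u):=\frac{u e^{xu}}{e^u-1}=\sum_{n\ge0}B_n(x)\frac{u^n}{n!}$, the sum over $q$ is a binomial transform yielding a factor $e^t$, and two successive Taylor expansions collapse the remaining sums, since $\sum_{l\ge0}B_{p+l}(x)\frac{t^l}{l!}=G_x^{(p)}(t)$ and $\sum_{p\ge0}\frac{s^p}{p!}G_x^{(p)}(t)=G_x(s+t)$. Carrying out the $x$-integration then gives
\[
\mathcal S(s,t)=e^t\,\phi(s+t),\qquad
\phi(w):=\int_0^1(x-1)\,\frac{w e^{xw}}{e^w-1}\,dx=\frac{1+w-e^w}{w(e^w-1)}.
\]
All interchanges of sum and integral are legitimate because the series converge for $|s+t|<2\pi$.

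Finally I would assemble the two sides. The left side of \eqref{4-3} is $L(p,q)=(-1)^{p+1}\Sigma(p,q)+(-1)^{q+1}\Sigma(q,p)$, whose generating function is $\mathcal L(s,t)=-\mathcal S(-s,t)-\mathcal S(-t,s)=-e^t\phi(t-s)-e^s\phi(s-t)$, while the right side gives $\mathcal R(s,t)=\sum_{p,q\ge0}\frac{s^p t^q}{(p+q+1)!}=\frac{e^s-e^t}{s-t}$ after grouping by $p+q$. Setting $v=s-t$ and using $e^s=e^t e^v$, the identity $\mathcal L=\mathcal R$ reduces to the single elementary relation $e^v(1-v)-1+e^v(1+v-e^v)=-(e^v-1)^2$, which is immediate; comparing coefficients of $\frac{s^p}{p!}\frac{t^q}{q!}$ then yields \eqref{4-3}. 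The main obstacle is not any of these computations but the bookkeeping of the normalisation: it is precisely the factor $x-1$ (rather than $x$) in the integral representation that forces $B_{m+1}(0)$, hence the correct $\phi$, so that $\mathcal S$ closes up to $\mathcal R$. With the Section~\ref{sec-1} convention one would obtain $\phi(w)+1$ in place of $\phi(w)$, and the identity would fail.
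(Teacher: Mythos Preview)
Your proof is correct. The paper does not prove Lemma~\ref{Lemma-4-2} at all: it merely cites Saalsch\"utz, Gelfand, and Agoh--Dilcher and moves on. So there is no ``paper's own proof'' to compare with; you have supplied a complete self-contained argument where the paper relies on the literature.

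Your generating-function approach is clean: the integral formula $\frac{B_{m+1}}{m+1}=\int_0^1(x-1)B_m(x)\,dx$, the collapse of the double sum via Taylor's theorem to $e^t\phi(s+t)$ with $\phi(w)=\frac{1+w-e^w}{w(e^w-1)}$, and the reduction of $\mathcal L=\mathcal R$ to the one-variable identity in $v=s-t$ all check out. The elementary identity you quote is equivalent to $e^{2v}-2e^v+1=(e^v-1)^2$ after expanding, so the verification is immediate.

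Your remark about the Bernoulli-number convention is well taken and worth emphasising. The paper defines $B_n$ through $\frac{te^t}{e^t-1}$, which gives $B_1=+\tfrac12$, yet the test case $(p,q)=(1,0)$ shows that Lemma~\ref{Lemma-4-2} requires $B_1=-\tfrac12$. Your claim that this has no effect on \eqref{4-1} is correct, since the indices there are all even. It may be worth noting, however, that the paper does invoke $B_1$ explicitly in the proof of Proposition~\ref{Prop-4-5} (where \eqref{4-3} is applied with $(p,q)=(0,2N-1)$), and there it writes $-B_1=\tfrac12$, i.e.\ it silently uses the classical convention $B_1=-\tfrac12$ rather than the one declared in Section~\ref{sec-1}. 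So the inconsistency you spotted is present in the paper itself, and your flag is exactly the right one to raise.
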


Using this relation, we will prove  Theorem \ref{Theorem-4-1}.

\begin{proof}[Proof of Theorem \ref{Theorem-4-1}]
By \eqref{4-1} with $(m,n)=(1,2\ell-2)$, we have
\begin{align*}%\label{4-1}
R(-1,-2\ell+2)&=\frac{(2\ell-2)!}{(2\ell)!} -\sum_{j=1}^{\ell-1}\binom{2\ell-2}{2j}\frac{B_{2\ell-2j}}{2\ell-2j}. \notag 
\end{align*}
Setting $N=\ell-1\geq 0$ and $h=\ell-j-1$, we have 
\begin{align}\label{rrr}
& R(-1,-2N)=\frac{(2N)!}{(2N+2)!}-\sum_{h=0}^{N-1}\binom{2N}{2h}\frac{B_{2h+2}}{2h+2}.
\end{align}
When $N=0$, \eqref{rrr} implies $R(-1,0)=1/2$. When $N\geq 1$, 
setting $(p,q)=(2N,1)$ in Lemma \ref{Lemma-4-2}, we have 
$$-\frac{B_{2N+2}}{2N+2}+\sum_{h=0}^{N}\binom{2N}{2h}\frac{B_{2h+2}}{2h+2}=\frac{(2N)!}{(2N+2)!},$$
because $B_{2r+1}=0$ $(r \geq 1)$. Combining this with \eqref{rrr}, 
we complete the proof. 
\end{proof}

Next we aim to prove the following reciprocity relation among $R(-m,-n)$. 

\begin{thm}\label{Theorem-4-3}\ 
For $m,n \in \mathbb{N}$ with $2\nmid (m+n)$, 
\begin{align}\label{4-4}
& (-1)^n R(-m,-n)+(-1)^m R(-n,-m)=\frac{m!n!}{(m+n+1)!}.
\end{align}
\end{thm}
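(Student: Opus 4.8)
The plan is to derive \eqref{4-4} directly from the Bernoulli reciprocity relation in Lemma \ref{Lemma-4-2}, just as Theorem \ref{Theorem-4-1} was derived from its special case $(p,q)=(2N,1)$. Starting from the representation \eqref{4-1},
\[
R(-m,-n)=(-1)^n\frac{m!n!}{(m+n+1)!}-\sum_{j=1}^{[n/2]}\binom{n}{2j}\frac{B_{m+n+1-2j}}{m+n+1-2j},
\]
I would first rewrite the sum over even indices as a sum over \emph{all} indices $l$ with $0\le l\le n$ of $\binom{n}{l}B_{m+n+1-l}/(m+n+1-l)$: since $m+n$ is odd, the term $l$ and the ``missing'' term pair up so that replacing $B_{m+n+1-l}$ by $B_{m+n+1-l}$ for odd $l$ costs nothing (the odd-index Bernoulli numbers $B_{2r+1}$ vanish for $r\ge 1$, and one must only watch the lone index giving $B_1$). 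More precisely, because $m+n+1$ is even, the exponent $m+n+1-l$ is odd exactly when $l$ is odd, so $B_{m+n+1-l}=0$ for all odd $l\ge 3$; the only odd $l$ that could contribute is the one with $m+n+1-l=1$, i.e. $l=m+n$, which lies in the range $0\le l\le n$ only when $m=0$ — excluded here since $m\in\mathbb N$ — or the one with $m+n+1-l=0$, which never occurs. Hence for $m,n\in\mathbb N$ with $m+n$ odd,
\[
\sum_{j=1}^{[n/2]}\binom{n}{2j}\frac{B_{m+n+1-2j}}{m+n+1-2j}=\sum_{l=1}^{n}\binom{n}{l}\frac{B_{m+n+1-l}}{m+n+1-l}=\sum_{l=0}^{n}\binom{n}{l}\frac{B_{m+n+1-l}}{m+n+1-l}-\frac{B_{m+n+1}}{m+n+1},
\]
and by the symmetry $\binom{n}{l}=\binom{n}{n-l}$ the middle sum equals $\sum_{l=0}^{n}\binom{n}{l}B_{(m+1)+l}/((m+1)+l)$, which is exactly the sum appearing in Lemma \ref{Lemma-4-2} with $p=m$, $q=n$ (after shifting $p+1=m+1$).

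Next I would substitute these rewritten forms into the left-hand side of \eqref{4-4}. Multiplying the $R(-m,-n)$ identity by $(-1)^n$ and the $R(-n,-m)$ identity by $(-1)^m$, the leading terms become $(-1)^{2n}\frac{m!n!}{(m+n+1)!}$ and $(-1)^{2m}\frac{m!n!}{(m+n+1)!}$, i.e. each equals $\frac{m!n!}{(m+n+1)!}$, so these two contribute $2\cdot\frac{m!n!}{(m+n+1)!}$. The remaining contribution is
\[
-(-1)^n\!\left(\sum_{l=0}^{n}\binom{n}{l}\frac{B_{m+1+l}}{m+1+l}-\frac{B_{m+n+1}}{m+n+1}\right)-(-1)^m\!\left(\sum_{l=0}^{m}\binom{m}{l}\frac{B_{n+1+l}}{n+1+l}-\frac{B_{m+n+1}}{m+n+1}\right).
\]
The two $B_{m+n+1}/(m+n+1)$ correction terms combine to $\big((-1)^n+(-1)^m\big)\frac{B_{m+n+1}}{m+n+1}$, which vanishes because $m+n$ odd forces $(-1)^n=-(-1)^m$. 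What is left is precisely $-\big[(-1)^n S_1+(-1)^m S_2\big]$ where $S_1,S_2$ are the two Bernoulli sums in Lemma \ref{Lemma-4-2} with $(p,q)=(m,n)$; since $m+n$ is odd, $(-1)^{p+1}=(-1)^{m+1}=(-1)^n$ and $(-1)^{q+1}=(-1)^{n+1}=(-1)^m$, so Lemma \ref{Lemma-4-2} gives $(-1)^n S_1+(-1)^m S_2=\frac{m!n!}{(m+n+1)!}$. Therefore the left side of \eqref{4-4} equals $2\frac{m!n!}{(m+n+1)!}-\frac{m!n!}{(m+n+1)!}=\frac{m!n!}{(m+n+1)!}$, as claimed.

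The main obstacle I anticipate is the bookkeeping in the first step — correctly justifying that the even-index sum over $j$ coincides with the full sum over $l$ and handling the boundary index carefully, since the parity argument hinges delicately on $m+n$ being odd (so that $m+n+1$ is even and the odd-argument Bernoulli numbers drop out) and on $m,n\ge 1$ (so that no stray $B_1$ term sneaks in). Once that identification is clean, the rest is a sign-tracking exercise using $(-1)^n=-(-1)^m$ and a single application of Lemma \ref{Lemma-4-2}. As a sanity check I would specialize to $n=2N$, $m=1$ and verify that \eqref{4-4} reduces consistently to Theorem \ref{Theorem-4-1} together with the value of $R(-2N,-1)$.
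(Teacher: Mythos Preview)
Your proof is correct and follows essentially the same approach as the paper's: both extend the even-index sum in $R(-m,-n)$ to a full sum over all indices (you via $B_{2r+1}=0$ for $r\ge 1$, the paper via $\zeta(-2k)=0$), then apply Lemma~\ref{Lemma-4-2} with $(p,q)=(m,n)$ together with the parity observation $(-1)^n=-(-1)^m$. The only difference is cosmetic: the paper routes the computation through the $\zeta$-representation \eqref{3-12} and the relation \eqref{zzz}, whereas you stay in the Bernoulli-number language of \eqref{4-1} throughout, which is slightly more direct.
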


\begin{proof}
Using \eqref{zzz} we see that the left-hand side of \eqref{4-3} 
(with $(p,q)=(m,n)\in\mathbb{N}^2$) equals to
\begin{align*}
& (-1)^{m+1}\sum_{l=0}^{n}\binom{n}{l}(-1)^{m+l}\zeta(-m-l)+(-1)^{n+1}\sum_{l=0}^{m}\binom{m}{l}(-1)^{n+l}\zeta(-n-l).
\end{align*}
By setting $j=n-l$ and $j=m-l$ in the first and the second terms, respectively, 
this can be written as 
\begin{align*}
& \sum_{j=0}^{n}\binom{n}{j}(-1)^{n-j+1}\zeta(-m-n+j)+\sum_{j=0}^{m}\binom{m}{j}(-1)^{m-j+1}\zeta(-m-n+j).
\end{align*}
Now assume $2\nmid (m+n)$.  Then the two terms corresponding to $j=0$ are
cancelled with each other, and hence,
\eqref{4-3} implies that
%\begin{align*}%\label{4-5}
%& \sum_{j=0}^{n}\binom{n}{j}(-1)^{n-j+1}\zeta(-m-n+j) \\
%& \quad +\sum_{j=0}^{m}\binom{m}{j}(-1)^{m-j+1}\zeta(-m-n+j)=\frac{m!n!}{(m+n+1)!}. \notag
%\end{align*}
%Since the terms with $j=0$ on the left-hand side are cancelled because of $2\nmid (m+n)$, we obtain
\begin{align}\label{4-5}
& (-1)^{n-1}\sum_{j=1}^{n}\binom{n}{j}(-1)^{j}\zeta(-m-n+j) \\
& \quad +(-1)^{m-1}\sum_{j=1}^{m}\binom{m}{j}(-1)^{j}\zeta(-m-n+j)=\frac{m!n!}{(m+n+1)!}. \notag
\end{align}
From \eqref{3-12}, we have
\begin{align}\label{4-6}
R(-m,-n)&=(-1)^n\frac{m!n!}{(m+n+1)!} +\sum_{k=1 \atop k:\,\text{\rm even}}^{n}\binom{n}{k}\zeta(-m-n+k)\\
&=(-1)^n\frac{m!n!}{(m+n+1)!} +\sum_{k=1}^{n}\binom{n}{k}(-1)^{k}\zeta(-m-n+k), \notag
\end{align}
because $\zeta(-m-n+k)=0$ for any odd $k$ with $k\leq n$ and $m\geq 1$. In fact, $-m-n+k$ is even and negative in this case. From \eqref{4-5} and \eqref{4-6}, we have
\begin{align*}
& (-1)^{n-1}\left\{R(-m,-n)-(-1)^n\frac{m!n!}{(m+n+1)!}\right\}\\
& +(-1)^{m-1}\left\{R(-n,-m)-(-1)^m\frac{m!n!}{(m+n+1)!}\right\}=\frac{m!n!}{(m+n+1)!}.
\end{align*}
Thus we complete the proof.
\end{proof}

Combining Theorems \ref{Theorem-4-1} and \ref{Theorem-4-3}, we obtain the following.

\begin{cor} \label{Cor-4-4}\ For $N \in \mathbb{N}$, 
$$ R(-2N,-1)=-\frac{1}{(2N+1)(2N+2)}.$$
\end{cor}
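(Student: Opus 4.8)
\textbf{Proof proposal for Corollary \ref{Cor-4-4}.}
The plan is to apply Theorem \ref{Theorem-4-3} with the pair $(m,n)=(2N,1)$, which is a legitimate choice since $2\nmid(2N+1)=m+n$, and then feed in the value of $R(-1,-2N)$ already computed in Theorem \ref{Theorem-4-1}. Concretely, the reciprocity relation \eqref{4-4} with this substitution reads
\begin{align*}
(-1)^1 R(-2N,-1)+(-1)^{2N} R(-1,-2N)=\frac{(2N)!\,1!}{(2N+2)!},
\end{align*}
that is, $-R(-2N,-1)+R(-1,-2N)=\dfrac{1}{(2N+1)(2N+2)}$ after simplifying $(2N)!/(2N+2)!$.

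Since $N\in\mathbb{N}$, we are in the range $N\geq 1$ of Theorem \ref{Theorem-4-1}, which gives $R(-1,-2N)=0$. Substituting this into the displayed identity immediately yields
\begin{align*}
R(-2N,-1)=-\frac{1}{(2N+1)(2N+2)},
\end{align*}
which is the asserted formula.

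There is essentially no obstacle here: the corollary is a direct specialization of the two preceding theorems, and the only thing to check is the bookkeeping of signs, namely that $(-1)^m=(-1)^{2N}=1$ and $(-1)^n=(-1)^1=-1$ when we set $(m,n)=(2N,1)$ in \eqref{4-4}, together with the elementary simplification $(2N)!/(2N+2)! = 1/((2N+1)(2N+2))$. If one wished, one could equally take $(m,n)=(1,2N)$ in \eqref{4-4}; the roles of the two terms swap but the conclusion is the same. I would also remark, for the reader's orientation, that by Remark \ref{Remark-3-2} this computation shows $\Phi_2^{\rm Rev}(-2N,-1;1,\Lambda)$ fails to converge, since $R(-2N,-1)\neq 0$.
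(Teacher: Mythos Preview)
Your proof is correct and follows exactly the approach indicated in the paper, which simply says the corollary follows by combining Theorems \ref{Theorem-4-1} and \ref{Theorem-4-3}. The sign bookkeeping and factorial simplification are handled correctly.
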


Finally we prove the following.

\begin{prop} \label{Prop-4-5}\ For $N \in \mathbb{N}$, 
$$R(0,-2N-1)=-\frac{1}{2}.$$
\end{prop}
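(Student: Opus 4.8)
The plan is to compute $R(0,-2N-1)$ directly from the closed formula \eqref{4-1}, but to do so efficiently by exploiting the reciprocity relations already established. Setting $(m,n)=(0,2N+1)$ in \eqref{3-12} gives
$$R(0,-2N-1)=(-1)^{2N+1}\frac{0!\,(2N+1)!}{(2N+2)!}+\sum_{k=1,\ k:\,\text{even}}^{2N+1}\binom{2N+1}{k}\zeta(k-2N-1)=-\frac{1}{2N+2}+\sum_{j=1}^{N}\binom{2N+1}{2j}\zeta(2j-2N-1),$$
and then rewriting via \eqref{zzz} turns this into a finite sum of Bernoulli numbers. The first step is therefore to put $R(0,-2N-1)$ into the form
$$R(0,-2N-1)=-\frac{1}{2N+2}-\sum_{j=1}^{N}\binom{2N+1}{2j}\frac{B_{2N+2-2j}}{2N+2-2j},$$
which is exactly \eqref{4-1} with $(m,n)=(0,2N+1)$ (note $[n/2]=N$ here).

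The key step is to recognize this sum as an instance of Lemma \ref{Lemma-4-2}. I would set $(p,q)=(0,2N+1)$ in \eqref{4-3}. The left-hand side becomes
$$-\sum_{l=0}^{2N+1}\binom{2N+1}{l}\frac{B_{1+l}}{1+l}+(-1)^{2N+2}\cdot\frac{B_{2N+2}}{2N+2}=-\sum_{l=0}^{2N+1}\binom{2N+1}{l}\frac{B_{1+l}}{1+l}+\frac{B_{2N+2}}{2N+2},$$
while the right-hand side is $\frac{0!\,(2N+1)!}{(2N+2)!}=\frac{1}{2N+2}$. In the first sum, all terms with $1+l$ odd and $\geq 3$ vanish since $B_{2r+1}=0$ for $r\geq 1$; the surviving terms are $l=0$ (contributing $B_1/1=-1/2$), $l=1$ (contributing $B_2/2$), and the even indices $1+l=2j+2$, i.e. $l=2j+1$ for $j\geq 1$ up to $l=2N+1$, i.e. $j$ up to $N$. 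Matching $\binom{2N+1}{2j+1}=\binom{2N+1}{2N-2j}$ with the binomials appearing in $R(0,-2N-1)$ via the symmetry $\binom{2N+1}{2j}=\binom{2N+1}{2N+1-2j}$ will be the bookkeeping needed to align the two expressions. After this reindexing, Lemma \ref{Lemma-4-2} yields a linear relation that, combined with the expression for $R(0,-2N-1)$ above and the vanishing $B_{2N+2-2j}$ contributions being regrouped, collapses to $R(0,-2N-1)=-\tfrac12$.

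I expect the main obstacle to be purely combinatorial: carefully matching the index ranges and binomial symmetries so that the sum of Bernoulli numbers in \eqref{4-1} is identified termwise with the sum appearing in Lemma \ref{Lemma-4-2} with $(p,q)=(0,2N+1)$, and correctly tracking the isolated $l=0$ (and possibly $l=1$) terms that sit outside the matched range. There is no analytic difficulty and no need to invoke Assumption \ref{Ass-2} or anything about zeros of $\zeta$; once the algebraic identity is set up correctly, the conclusion is immediate. As a sanity check I would verify the case $N=1$ by hand: $R(0,-3)=-\tfrac14-\binom{3}{2}\zeta(-2)=-\tfrac14-3\cdot 0=-\tfrac14$, which indeed equals $-\tfrac12$? — here I would re-examine, since $\zeta(-2)=0$ gives $-1/4$, not $-1/2$, indicating that the correct reading of \eqref{3-12} must include the $k=2N$ term or a boundary term I have mishandled; resolving this discrepancy (likely the $\ell$-dependent extra term from Theorem \ref{Theorem-3-1}, or the precise even-$k$ range when $m=0$) is in fact the crux, and I would pin it down before writing the final argument.
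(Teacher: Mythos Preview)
Your approach is exactly the paper's: express $R(0,-(2N+1))$ via \eqref{4-1} and then invoke Lemma~\ref{Lemma-4-2} with $(p,q)=(0,2N+1)$ (the paper uses $(p,q)=(0,2N-1)$, an innocuous index shift). The setup you wrote,
\[
R(0,-2N-1)=-\frac{1}{2N+2}-\sum_{j=1}^{N}\binom{2N+1}{2j}\frac{B_{2N+2-2j}}{2N+2-2j},
\]
is correct, and the reindexing you describe is precisely what the paper does.

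The ``discrepancy'' you flag at the end is not a real obstacle but an arithmetic slip in your sanity check. With $(m,n)=(0,3)$ and $k=2$ the zeta term in \eqref{3-12} is $\zeta(k-m-n)=\zeta(2-3)=\zeta(-1)=-\tfrac{1}{12}$, not $\zeta(-2)$. Hence
\[
R(0,-3)=-\tfrac{1}{4}+\binom{3}{2}\zeta(-1)=-\tfrac{1}{4}+3\cdot\bigl(-\tfrac{1}{12}\bigr)=-\tfrac{1}{2},
\]
as claimed. There is no missing boundary term and nothing from Theorem~\ref{Theorem-3-1} to import; once you fix this slip, your argument is complete and identical in spirit to the paper's proof.
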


\begin{proof}
By \eqref{4-1} with $(m,n)=(0,2N-1)$, we have
\begin{align*}
R(0,1-2N)&=-\frac{1}{2N} -\sum_{j=1}^{N-1}\binom{2N-1}{2j}\frac{B_{2N-2j}}{2N-2j}.
\end{align*}
We obtain from \eqref{4-3} with $(p,q)=(0,2N-1)$ that
\begin{align*}
\frac{1}{2N}&=-B_1-\sum_{\mu=0}^{N-1}\binom{2N-1}{2\mu+1}\frac{B_{2\mu+2}}{2\mu+2}+\frac{B_{2N}}{2N}\\
& = -B_1-\sum_{\mu=0}^{N-2}\binom{2N-1}{2\mu+1}\frac{B_{2\mu+2}}{2\mu+2}\\
& =\frac{1}{2}-\sum_{j=1}^{N-1}\binom{2N-1}{2j}\frac{B_{2N-2j}}{2N-2j},
\end{align*}
by setting $j=N-1-\mu$. Combining these results, we complete the proof.
\end{proof}

%%%%%%%%%%%%%%%%%%%%%%%%%%%%%%%%%%%%%%
\section{Residues of $\Phi_2(s_1,s_2;1,\widetilde{\alpha})$} \label{sec-5}
%%%%%%%%%%%%%%%%%%%%%%%%%%%%%%%%%%%%%%

In this section, similar to Section \ref{sec-3}, we observe the analytic behavior of 
$\Phi _2 (s_1,s_2;1,\widetilde{\alpha} )$ 
 at $(s_1,s_2)=(-m,-n)$ for $m\in \mathbb{Z}$ and $n \in \mathbb{N}_{0}$ with 
$m \not\equiv n \mod 2$, under Assumption \ref{Ass-2}.

Let $\alpha\in \mathcal{A}$ satisfying three conditions (I)-(III) in Section \ref{sec-1}. Let $\delta=\delta(\alpha)>0$ be defined in (I). 
Furthermore, for even $k\in \mathbb{N}$, let 
\begin{equation*}
D_k=D_k^{(\alpha)}=\frac{\Phi(-k;\alpha)}{\zeta'(-k)}=\frac{(-1)^{k/2}2(2\pi)^k\Phi(-k;\alpha)}{k! \zeta(k+1)}. 
\end{equation*}
We prove the following.

\begin{thm}\label{Theorem-5-1}\ Let $m\in \mathbb{Z}$, $n\in \mathbb{N}_{0}$ and $\ell \in \mathbb{N}$ with $m+n=2\ell-1$.\\
Under Assumption \ref{Ass-2}, we have
\begin{align}
\Phi_2&(-m,-n+\varepsilon;1,\widetilde{\alpha})  = \frac{1}{\varepsilon}\bigg\{ (-1)^nD_{2\ell}\frac{m!n!}{(m+n+1)!} \label{5-1}\\
& \qquad +\sum_{k=1 \atop k:\,\text{\rm even}}^{n}(-1)^k\binom{n}{k}D_k\zeta(-m-n+k)\bigg\}+O(1)\quad (\varepsilon \to 0) \notag
\end{align}
if $m\geq 0$, and
\begin{align}\label{3-2} 
\Phi_2&(-m,-n+\varepsilon;1,\widetilde{\alpha})  = \frac{2D_{2\ell}}{\varepsilon^2}\binom{n}{2\ell}+O(1/\varepsilon)\quad (\varepsilon \to 0)%\notag
\end{align}
if $m \leq -1$.
\end{thm}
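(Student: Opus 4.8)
The plan is to substitute $(s_1,s_2)=(-m,-n+\varepsilon)$ into the explicit meromorphic continuation \eqref{mainthm2}, exactly as was done for $\Phi_2(s_1,s_2;1,\Lambda)$ in the proof of Theorem \ref{Theorem-3-1}, and then track which of the resulting six groups of terms blow up as $\varepsilon\to 0$. First I would observe that the terms involving the factor $1/\Gamma(s_2)=1/\Gamma(-n+\varepsilon)$ that are \emph{not} paired with a compensating gamma factor in the numerator — namely the infinite sum over the zeros $\rho_n$ and the Mellin--Barnes integral along $(N-\eta)$ — tend to $0$, since $1/\Gamma(-n+\varepsilon)\to 0$. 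Likewise the leading term $\frac{\Gamma(s_2-\delta)\Gamma(\delta)}{\Gamma(s_2)}\mathrm{Res}_{s=\delta}(\Phi/\zeta)\,\zeta(s_1+s_2-\delta)$ has a $1/\Gamma(-n+\varepsilon)$ which kills it (here one uses $\delta>0$, $n\in\mathbb{N}_0$, so there is no competing pole), and $-2\Phi(0;\alpha)\zeta(s_1+s_2)=-2\Phi(0;\alpha)\zeta(1-2\ell+\varepsilon)$ is bounded since $1-2\ell\le -1$ is a negative odd integer, a trivial zero of $\zeta$; so both are $O(1)$.

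Next I would examine the odd-$k$ sum $-\sum_{k\ \mathrm{odd}}\binom{-s_2}{k}\frac{(k+1)\Phi(-k;\alpha)}{B_{k+1}}\zeta(s_1+s_2+k)$: the argument $-m-n+k+\varepsilon=1-2\ell+k+\varepsilon$ is an odd integer shifted by $\varepsilon$, hence $\zeta$ is evaluated near a negative odd integer (a trivial zero) unless that integer equals $1$, i.e. $k=2\ell$; but $k$ is odd and $2\ell$ is even, so no pole of $\zeta$ is hit and this whole sum is $O(1)$. The real contribution comes from the even-$k$ block. There, $\zeta(s_1+s_2+k)=\zeta(1-2\ell+k+\varepsilon)$ has a simple pole precisely when $k=2\ell$, contributing $1/\varepsilon$; and $\Gamma'(-n+k+\varepsilon)/\Gamma(-n+\varepsilon)$ contributes another $1/\varepsilon$ via \eqref{3-7}–\eqref{3-8}, whenever $k\le n$. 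Paralleling the $A_{41},A_{42},A_{43}$ decomposition: the piece $\binom{-s_2}{k}D_k b_k\zeta(s_1+s_2+k)$ and $\binom{-s_2}{k}c_k(\alpha)\zeta(s_1+s_2+k)$ give a simple $1/\varepsilon$ from $k=2\ell$; the piece $-\binom{-s_2}{k}\frac{D_k}{k!}\zeta'(s_1+s_2+k)$ gives $1/\varepsilon^2$ from $\zeta'(1+\varepsilon)\sim -1/\varepsilon^2$ together with $\binom{n-\varepsilon}{2\ell}$; and the gamma-ratio term $-\frac{\Gamma'(s_2+k)}{\Gamma(s_2)}\frac{D_k}{(k!)}\zeta(s_1+s_2+k)$ combines the two $1/\varepsilon$'s into $1/\varepsilon^2$ plus $1/\varepsilon$. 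The key bookkeeping is that whenever $m\geq 0$ we have $n\le 2\ell-1$, so $\binom{n}{2\ell}=0$; the $\binom{n-\varepsilon}{2\ell}$ factors are then $O(\varepsilon)$ (as computed in the display before \eqref{3-6}), which cancels exactly one power of $\varepsilon^{-1}$, leaving only a simple pole and producing the residue $(-1)^n D_{2\ell}\frac{m!n!}{(m+n+1)!}+\sum_{k\ \mathrm{even}}(-1)^k\binom{n}{k}D_k\zeta(-m-n+k)$; the sign $(-1)^k$ here traces back to the $(-1)^{k-1}n!/(n-k)!$ in \eqref{3-8}. When $m\le -1$, i.e. $n\ge 2\ell$, $\binom{n}{2\ell}\neq 0$ and the $1/\varepsilon^2$ term survives with coefficient $2D_{2\ell}\binom{n}{2\ell}$ — the factor $2$ coming from the two sources ($\zeta'$ and the $\Gamma'/\Gamma$–$\zeta$ product), just as the "$2/\varepsilon^2$" appears in \eqref{Kocchiga-3-2}; I would not spell out the $1/\varepsilon$ coefficient since the statement only asks for $O(1/\varepsilon)$.

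The main obstacle I anticipate is purely organizational rather than conceptual: one must carefully match the shape of \eqref{mainthm2} against the shape of \eqref{mainthm} used in Theorem \ref{Theorem-3-1} — the role of $-a_k+k!b_k$ there is played here by a combination of $D_k b_k$ and $c_k(\alpha)$, and the coefficient $\frac{1}{k!}\frac{\Gamma'(s_2+k)}{\Gamma(s_2)}$ there becomes $\frac{\Gamma'(s_2+k)}{\Gamma(s_2)}\frac{D_k}{k!}$ here (note $D_k$ already contains a $1/k!$ relative to the $\Lambda$-case normalization). Getting the constant $D_{2\ell}$ factored correctly out of each of the three divergent sub-blocks, and checking that the $b_{2\ell}$- and $c_{2\ell}$-contributions and the $\gamma$ from $\zeta(1+\varepsilon)$ recombine so that only $D_{2\ell}$ times the "geometric" factor $\frac{m!n!}{(m+n+1)!}$ and the clean sum over even $k\le n$ survive, is the delicate point; but it is exactly the same cancellation that already worked in \eqref{3-4}–\eqref{3-11}, now carried along with the extra scalar $D_k$. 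I would therefore present the proof as: (i) discard $A_5,A_6$ and the $\delta$-residue term by $1/\Gamma$; (ii) note $A_1,A_2$ and the odd-$k$ sum are $O(1)$ because $\zeta$ is hit at trivial zeros; (iii) expand the even-$k$ block in $\varepsilon$ using \eqref{3-7}, separating $k=2\ell$ from $k\neq 2\ell$ and $m\ge 0$ from $m\le -1$, and read off the coefficients of $\varepsilon^{-2}$ and $\varepsilon^{-1}$.
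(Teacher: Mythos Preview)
Your approach is essentially identical to the paper's own proof: substitute $(s_1,s_2)=(-m,-n+\varepsilon)$ into \eqref{mainthm2}, observe that all terms except the even-$k$ block are $O(1)$, and then expand that block in $\varepsilon$ exactly as in \eqref{3-4}--\eqref{3-11} with the extra scalars $D_k$ carried through (the paper splits it into four pieces $A_{41},\dots,A_{44}$, matching your description). One cosmetic slip to correct: the trivial zeros of $\zeta$ sit at the negative \emph{even} integers, so $\zeta(1-2\ell)$ and the odd-$k$ values $\zeta(1-2\ell+k)$ are finite (generically nonzero) values rather than trivial zeros --- your $O(1)$ conclusions there remain valid, but only because $\zeta$ is regular away from $s=1$, not because it vanishes; similarly, if $\delta\in\mathbb{N}$ the factor $\Gamma(-n+\varepsilon-\delta)$ \emph{does} have a pole, but the ratio $\Gamma(-n+\varepsilon-\delta)/\Gamma(-n+\varepsilon)$ stays bounded, so $A_1=O(1)$ still holds.
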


Note that the reverse value \eqref{5-1} is convergent if and only if the residue on the right-hand side vanishes.
Also, if $D_{2\ell}\neq 0$ (namely $\Phi(-2\ell;\alpha)\neq 0$), the reverse value \eqref{3-2} is not convergent.

\begin{proof}[Proof of Theorem \ref{Theorem-5-1}]\ 
Since the proof is quite similar to the case of $\Phi _2 (s_1,s_2;1,\Lambda)$, we will give a sketch of the proof.  

Set $(s_1,s_2)=(-m,-n+\varepsilon)$ for a small $\varepsilon>0$ in 
\eqref{mainthm2}. Then 
\begin{align*} %\label{5-3}
&\Phi _2 (-m,-n+\varepsilon;1,\widetilde{\alpha} ) \\
&=\frac{\Gamma(-n+\varepsilon-\delta)\Gamma(\delta)}{\Gamma(-n+\varepsilon)} \Res_{s=\delta}\ \left(\frac{\Phi(s;\alpha)}{\zeta(s)}\right)\zeta(-m-n+\varepsilon-\delta) \notag\\
& \ -2\Phi(0;\alpha)\zeta (-m-n+\varepsilon)-\sum _{k=1 \atop k:{\rm odd}}^{N-1}\binom {n-\varepsilon}k \frac {(k+1)\Phi(-k;\alpha)}{B_{k+1}}\zeta (-m-n+\varepsilon+k)\nonumber \\
&\ +\sum _{k=1\atop k:{\rm even}}^{N-1}  \bigg[ \binom {n-\varepsilon}k\bigg\{ \frac {(-1)^{k/2}2(2\pi )^{k}\Phi(-k;\alpha)}{\zeta (1+k)}\notag \\
& \ \ \times \left(b_k \zeta(-m-n+\varepsilon+k)-\frac{1}{k!}\zeta'(-m-n+\varepsilon+k)\right)+c_k(\alpha)\zeta(-m-n+\varepsilon+k)\bigg\}\nonumber \\
& \qquad -\frac{\Gamma'(-n+\varepsilon+k)}{\Gamma(-n+\varepsilon)}\frac {(-1)^{k/2}2(2\pi )^{k}\Phi(-k;\alpha)}{(k!)^2\zeta (1+k)}\zeta(-m-n+\varepsilon+k)\bigg]\notag\\
&\ +\frac 1 {\Gamma (-n+\varepsilon)}\sum _{n=1}^\infty \Gamma (-n+\varepsilon-\rho _n)\Gamma (\rho _n)\frac {\Phi(\rho_n;\alpha)}{\zeta'(\rho _n)}\zeta (-m-n+\varepsilon-\rho _n)\nonumber \\
&\ +\frac 1 {2\pi i\Gamma (-n+\varepsilon)}\int _{(N-\eta )}\Gamma (-n+\varepsilon+z)\Gamma (-z)\Phi(-z;\widetilde{\alpha})\zeta (-m-n+\varepsilon+z)dz\nonumber\\
& = A_1-A_2-A_3+A_4+A_5+A_6, \notag
\end{align*}
say. By just the same way as in the case of $\Phi _2 (-m,-n+\varepsilon;1,\Lambda)$, we can check that $A_1,\,A_2,\,A_3,\,A_5$ and $A_6$ are $O(1)$ as $\varepsilon \to 0$. By the definition of $D_k$, we have
\begin{align*}
A_4&=\sum _{k=1\atop k:{\rm even}}^{N-1}  \binom {n-\varepsilon}{k} k!D_{k}b_k \zeta(-m-n+\varepsilon+k)\\
& -\sum _{k=1\atop k:{\rm even}}^{N-1} \binom {n-\varepsilon}{k} D_{k}\zeta'(-m-n+\varepsilon+k)\\
& +\sum _{k=1\atop k:{\rm even}}^{N-1}  \binom {n-\varepsilon}{k}c_k(\alpha)\zeta(-m-n+\varepsilon+k)\nonumber \\
& -\sum _{k=1\atop k:{\rm even}}^{N-1}\frac{\Gamma'(-n+\varepsilon+k)}{\Gamma(-n+\varepsilon)}\frac {D_k}{k!}\zeta(-m-n+\varepsilon+k)\notag\\
& =A_{41}-A_{42}+A_{43}-A_{44},
\end{align*}
say, where we choose a sufficiently large $N$. Let $m+n=2\ell-1$ $(\ell \in \mathbb{N})$. Similarly as in the proof of Theorem \ref{Theorem-3-1},
we obtain
\begin{equation*}
A_{41}=
\begin{cases}
\frac{1}{\varepsilon}\binom{n}{2\ell}(2\ell)!D_{2\ell}b_{2\ell}+O(1) & (m\leq -1),\\
O(1) & (m\geq 0),
\end{cases}
\end{equation*}
\begin{equation*}
A_{42}=
\begin{cases}
-\frac{1}{\varepsilon^2}\binom{n}{2\ell}D_{2\ell}+O(1/\varepsilon) & (m\leq -1),\\
\frac{1}{\varepsilon}(-1)^{n-1}D_{2\ell}\frac{m!n!}{(m+n+1)!}+O(1) & (m\geq 0),
\end{cases}
\end{equation*}
\begin{equation*}
A_{43}=
\begin{cases}
\frac{1}{\varepsilon}\binom{n}{2\ell}c_{2\ell}(\alpha)+O(1) & (m\leq -1),\\
O(1) & (m\geq 0),
\end{cases}
\end{equation*}
\begin{equation*}
A_{44}=
\begin{cases}
-\frac{1}{\varepsilon^2}\binom{n}{2\ell}D_{2\ell}+O(1/\varepsilon) & (m\leq -1),\\
\frac{1}{\varepsilon}\sum_{1\leq k \leq n \atop k:{\rm even}}(-1)^{k-1}\binom{n}{k}D_k\zeta(-m-n+k)+O(1) & (m\geq 0).
\end{cases}
\end{equation*}
Thus we complete the proof.
\end{proof}

%%%%%%%%%%%%%%%%%%%%%%%%%%%%%%%%%
\section{Corrigendum and addendum to \cite{MNT2021}} \label{sec-6}
%%%%%%%%%%%%%%%%%%%%%%%%%%%%%%%%

\noindent
Corrigendum to \cite{MNT2021}:

\begin{enumerate}[$\bullet$]
\item In \cite[Remark 5.2]{MNT2021}, we mentioned that if $m+n$ is odd with $n\geq 2$, then $\Phi_2(s_1,s_2;1,\Lambda)$ is not convergent as ${s_1\to -m}$ and ${s_2\to -n}$. However, as stated in Sections \ref{sec-3} and \ref{sec-4}
(especially Theorem \ref{Theorem-4-1}), it sometimes happens that $\Phi_2^{\rm Rev}(-m,-n;1,\Lambda)$ is convergent. 
\item In \cite[Example 5.3]{MNT2021}, we listed 
$$\Phi _2^{\rm Rev} (-1,0;1,\Lambda )=\frac 1 {12}\log 2\pi -\frac{3}{4}-\frac{\zeta''(-2)}{4\zeta'(-2)}+\frac{\gamma}{2},$$
which is not correct. It follows from Theorem \ref{Theorem-4-1} that 
$\Phi _2^{\rm Rev} (-1,0;1,\Lambda )$ is not convergent.
\item In \cite[Example 6.1]{MNT2021}, we listed 
$$\Phi _2^{\rm Rev}(-1,0;1,\mu )=\frac 1 6 -\frac {2\pi ^2}{\zeta (3)}\left(\frac{3}{2}-\gamma\right)-\frac{\zeta''(-2)}{4\zeta'(-2)^2},$$
which is not correct. In fact, as noted in \eqref{mu-Dir}, we see that $\widetilde{\alpha}=\mu$ implies $\Phi(s;\alpha)=1$, hence 
\begin{equation*}
D_k=D_k^{(\alpha)}=\frac{1}{\zeta'(-k)}=\frac{(-1)^{k/2}2(2\pi)^k}{k! \zeta(k+1)}\neq 0
\end{equation*}
for even $k\geq 0$. Therefore it follows from Theorem \ref{Theorem-5-1} with $\widetilde{\alpha}=\mu$ that 
\begin{align}
\Phi_2&(-1,\varepsilon;1,\mu)  = \frac{1}{\varepsilon}\frac{D_{2}}{2} +O(1)\qquad (\varepsilon \to 0),
\end{align}
which implies that $\Phi _2^{\rm Rev}(-1,0;1,\mu )$ is not convergent.
\end{enumerate}

\ 

\noindent
Addendum to \cite{MNT2021}:

In \cite[Proposition 5.1]{MNT2021} (= Proposition \ref{spv1}), 
we considered the situation $m,n\in\mathbb{N}_0$ with $2|(m+n)$, but
the case
$m=0$, $n\geq 2$ was excluded.    As mentioned in Remark 
\ref{exceptionalcase}, in this excluded case, 
$\Phi _2^{\rm Rev} (-m,-n;1,\Lambda )$
is indeed not convergent.

In fact, in the proof of \cite[Proposition 5.1]{MNT2021}, we need to
compute
$$
-\frac{1}{k!}\frac{\Gamma'(-n+k+\varepsilon)}{\Gamma(-n+\varepsilon)}
\zeta(-m-n+k+\varepsilon)
$$
for even $k\leq n$.   Using \eqref{3-8} we find that the above is
\begin{align}\label{add}
&=(-1)^k\binom{n}{k}\zeta(-m-n+k)\frac{1}{\varepsilon}
-\frac{C(k,n)}{k!}\zeta(-m-n+k)\\
&\quad +(-1)^k\binom{n}{k}\zeta'(-m-n+k)
+O(\varepsilon).\notag
\end{align}
If $m\geq 1$, then $-m-n+k$ is an even negative integer (because
$k$ and $m+n$ are even, $k\leq n$), and so $\zeta(-m-n+k)=0$.
Therefore \eqref{add} tends to
$\binom{n}{k}\zeta'(-m-n+k)$ as $\varepsilon \to 0$, which is the 
formula on the last line of
\cite[p.452]{MNT2021}.
However if $m=0$, then $n$ is even, and 
$\zeta(-m-n+k)=\zeta(0)=-1/2$ for $k=n$.
Therefore in this case \eqref{add} is $-(2\varepsilon)^{-1}+O(1)$, 
which is not convergent.

Similar phenomenon also happens for $\Phi_2(s_1,s_2;1,\mu)$.

\

\section*{Acknowledgments.}
The authors are sincerely grateful to Professor Masatoshi Suzuki who first
pointed out a mistake in \cite{MNT2021}.

\end{document}